\definecolor{lightseagreen}{rgb}{0.13, 0.7, 0.67}
\definecolor{darkred}{rgb}{0.55, 0.0, 0.0}
\pgfplotsset{compat=1.18}
\def\az{\alpha}
\def\dist{{\mathop\mathrm{\,dist\,}}}
\def\ez{\epsilon}
\def\bint{{\ifinner\rlap{\bf\kern.35em--}
\int\else\rlap{\bf\kern.45em--}\int\fi}\ignorespaces}
\def\bbint{{\ifinner\rlap{\bf\kern.35em--}
\hspace{0.078cm}\int\else\rlap{\bf\kern.45em--}\int\fi}\ignorespaces}
\newcommand{\R}{\mathbb{R}}
\newtheorem{thm}{Theorem}[section]
\newtheorem{lem}[thm]{Lemma}
\newtheorem{prop}[thm]{Proposition}
\numberwithin{equation}{section}
\theoremstyle{remark}
\newtheorem{rem}[thm]{Remark}
\def\bint{{\ifinner\rlap{\bf\kern.35em--}
\int\else\rlap{\bf\kern.45em--}\int\fi}\ignorespaces}
\newcommand{%
	
	\import{./}{.pdf_tex}
}[1]{%
	
	\import{./}{#1.pdf_tex}
}
\title[Serrin's overdetermined problem in rough domains]{Serrin's overdetermined problem in rough domains}
\author{Alessio Figalli and Yi Ru-Ya Zhang}
\date{\today}
\address{ETH Z\"urich, Department of Mathematics, R\"amistrasse 101, 8092, Z\"urich, Switzerland}
\email{alessio.figalli@math.ethz.ch}  
\address{State Key Laboratory of Mathematical Sciences, Academy of Mathematics and Systems Science, Chinese Academy of Sciences, Beijing 100190, China}
\address{Academy of Mathematics and Systems Science, the Chinese Academy of Sciences, Beijing 100190, China}
\email{yzhang@amss.ac.cn}
 \thanks{The second author is funded by National Key R\&D Program of China (Grant No. 2021YFA1003100), the Chinese Academy of Science,  and  NSFC grant No. 12288201. The first author have received funding from the European Research Council under the Grant Agreement No. 
721675 ``Regularity and Stability in Partial Differential Equations (RSPDE)''.}
\subjclass[2000]{35N25}
\keywords{Overdetermined problems, maximum principle, sets of finite perimeter.}
\begin{document}
\begin{abstract}
The classical Serrin's overdetermined theorem states that a $C^2$ bounded domain, which admits a function with constant Laplacian that satisfies both constant Dirichlet and Neumann boundary conditions, must necessarily be a ball. While extensions of this theorem to non-smooth domains have been explored since the 1990s, the applicability of Serrin's theorem to Lipschitz domains remained unresolved. 

This paper answers this open question affirmatively. Actually, our approach shows that the result holds for domains that are sets of finite perimeter with a uniform upper bound on the density, and it also allows for slit discontinuities.
\end{abstract}
 
\maketitle

\section{Introduction}

Given $\Omega\subset\mathbb{R}^n$ a bounded domain, Serrin's overdetermined problem aims to understand how overdetermined problems for PDEs within $\Omega$ influence the geometry of the domain. In its simplest formulation, whenever $\partial\Omega$ is sufficiently smooth, one investigates the following problem:
\begin{equation}\label{serrin}
\Delta u = -1 \ \text{in} \ \Omega, \qquad u = 0 \ \text{on} \ \partial \Omega, \quad \partial_\nu u = \mathbf{c} > 0 \ \text{on} \ \partial \Omega.
\end{equation}
Here, $\partial_\nu u$ represents the inward normal derivative of $u$ on $\partial \Omega$. 

If $\Omega$ is not smooth but its boundary has at least finite $(n-1)$-dimensional Hausdorff measure, then \eqref{serrin} is understood in the following weak distributional sense: 
\begin{equation}\label{weak formulation}
u \in W^{1,2}_0(\Omega)\qquad\text{and}\qquad \int_{\Omega} \nabla u\cdot \nabla\varphi \, dx = -\mathbf{c}\int_{\partial \Omega} \varphi \, d\mathscr{H}^{n-1} + \int_{\Omega} \varphi \, dx\quad \forall\, \varphi\in C^1(\mathbb{R}^n),
\end{equation}
where $\mathscr H^{n-1}$ denotes the $(n-1)$-dimensional Hausdorff measure.

Given that the Dirichlet problem already yields a unique (weak) solution, the addition of the Neumann boundary condition makes the problem overdetermined. Consequently, \eqref{serrin} may not have a solution in general, which implies that the choice of domain $\Omega$ cannot be arbitrary.

\subsection{Serrin's Theorem}
In his seminal theorem, assuming that $\partial\Omega$ is of class $C^2$ (so that $u\in C^2(\Omega)\cap C^1(\overline{\Omega})$), Serrin proved the following celebrated result:

\begin{thm}[{\cite{S1971}}]\label{serrin thm}
Let $\Omega\subset \R^n$ be a $C^2$ bounded domain. Then \eqref{serrin} admits a solution $u \in C^2(\Omega)\cap C^1(\overline{\Omega})$ if and only if, up to a translation, $\Omega$ is a ball of radius $R=R(n,\mathbf{c})>0$ and $u$ takes the form
\begin{equation}\label{eq:sol u}
u(x) = \frac { R^2 - |x|^2 } { 2n }.
\end{equation}
\end{thm}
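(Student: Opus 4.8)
\emph{Strategy.} The \emph{if} direction is a direct verification: for $u(x)=(R^2-|x|^2)/(2n)$ on $\Omega=B_R$ we have $\Delta u\equiv-1$, $u\equiv0$ on $\partial B_R$, and $\partial_\nu u\equiv|\nabla u|\equiv R/n$, so \eqref{serrin} holds with $\mathbf{c}=R/n$, which also fixes $R=R(n,\mathbf{c})=n\mathbf{c}$. For the \emph{only if} direction I would run Weinberger's $P$-function argument, which is the most economical route now that $\partial\Omega\in C^2$. First, since $\Delta u=-1<0$ with $u=0$ on $\partial\Omega$, the minimum principle gives $u\ge0$ and the strong maximum principle $u>0$ in $\Omega$; moreover $u+|x|^2/(2n)$ is harmonic, so $u\in C^\infty(\Omega)$, and $\partial\Omega\in C^2$ together with $\Delta u\equiv-1$ gives enough boundary regularity to legitimize the integrations by parts used below. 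Integrating $\Delta u=-1$ over $\Omega$, applying the divergence theorem, and using $\partial_\nu u\equiv\mathbf{c}$ produces the identity $|\Omega|=\mathbf{c}\,\mathscr{H}^{n-1}(\partial\Omega)$. Next set $P:=|\nabla u|^2+\tfrac2n u$. By Bochner's identity $\Delta|\nabla u|^2=2|D^2u|^2+2\nabla u\cdot\nabla(\Delta u)=2|D^2u|^2$, while Cauchy--Schwarz gives $|D^2u|^2\ge(\Delta u)^2/n=1/n$; hence $\Delta P=2(|D^2u|^2-\tfrac1n)\ge0$, i.e.\ $P$ is subharmonic in $\Omega$, and on $\partial\Omega$ we have $u=0$, $|\nabla u|=\partial_\nu u=\mathbf{c}$, so $P\equiv\mathbf{c}^2$ there.

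\emph{The rigidity step.} Now I would extract two integral identities. A Pohozaev--Rellich computation---integrating the pointwise identity $\diver\!\big[(x\cdot\nabla u)\nabla u-\tfrac12|\nabla u|^2x\big]=(x\cdot\nabla u)\Delta u+\tfrac{2-n}{2}|\nabla u|^2$ over $\Omega$ and using $u=0$, $\nabla u\parallel\nu$ with $|\nabla u|=\mathbf{c}$ on $\partial\Omega$, $\Delta u\equiv-1$, $\int_\Omega|\nabla u|^2=\int_\Omega u$, and $\int_{\partial\Omega}x\cdot\nu\,d\mathscr{H}^{n-1}=-n|\Omega|$---gives $\int_\Omega u\,dx=\tfrac{n}{n+2}\mathbf{c}^2|\Omega|$, hence $\int_\Omega P\,dx=\tfrac{n+2}{n}\int_\Omega u\,dx=\mathbf{c}^2|\Omega|$. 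Applying Green's second identity to the pair $(u,P)$ and using the boundary data $u=0$, $P=\mathbf{c}^2$, $\partial_\nu u=\mathbf{c}$ together with $\Delta u\equiv-1$ gives $\int_\Omega u\,\Delta P\,dx+\int_\Omega P\,dx=\mathbf{c}^3\mathscr{H}^{n-1}(\partial\Omega)$, so by the previous displays and the first step $\int_\Omega u\,\Delta P\,dx=\mathbf{c}^2\big(\mathbf{c}\,\mathscr{H}^{n-1}(\partial\Omega)-|\Omega|\big)=0$. Since $u>0$ in $\Omega$ and $\Delta P\ge0$, this forces $\Delta P\equiv0$, i.e.\ equality in the Cauchy--Schwarz step, so $D^2u\equiv\tfrac{\Delta u}{n}I=-\tfrac1n I$ in $\Omega$. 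Therefore $u(x)=c_0-\tfrac1{2n}|x-x_0|^2$ for some $x_0\in\R^n$, $c_0>0$; as $u>0$ in $\Omega$ and $u=0$ on $\partial\Omega$, the set $\Omega$ must be the ball of radius $R=\sqrt{2n\,c_0}$ centered at $x_0$, and the Neumann condition forces $R/n=\mathbf{c}$. Translating $x_0$ to the origin yields precisely \eqref{eq:sol u}.

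\emph{Main obstacle.} The one genuinely delicate point is \emph{justifying}, not merely writing, the two integral identities: the differentiated vector fields involve $D^2u$, which a priori is controlled only in the interior of $\Omega$ (with only a $C^2$ boundary and constant right-hand side, Schauder/$W^{2,p}$ theory gives $u\in C^{1,\alpha}(\overline\Omega)$ but not obviously $C^2(\overline\Omega)$). I would handle this by applying the divergence theorem on the shrunk domains $\Omega_\varepsilon:=\{x\in\Omega:\dist(x,\partial\Omega)>\varepsilon\}$ and letting $\varepsilon\to0$, using the uniform continuity of $\nabla u$ on $\overline\Omega$ (i.e.\ $u\in C^1(\overline\Omega)$) together with $\partial\Omega\in C^2$ to pass to the limit in the approximating boundary integrals; alternatively, one first mollifies $\partial\Omega$ to be $C^{2,\alpha}$, upgrades $u$ to $C^{2,\alpha}(\overline\Omega)$ by Schauder theory, argues on the approximations, and removes the mollification at the end. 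For completeness I note that Serrin's original proof instead proceeds by the method of moving planes, which avoids these global identities but requires his reflection/boundary-point lemma at would-be contact points; with $\partial\Omega$ smooth, the $P$-function route above is the shorter one.
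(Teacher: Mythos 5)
Your proof is correct, and it is the classical Weinberger argument: the $P$-function $P=|\nabla u|^2+\tfrac{2}{n}u$, the subharmonicity $\Delta P\ge0$, the volume identity $\int_\Omega u=\tfrac{n}{n+2}\mathbf c^2|\Omega|$, and the rigidity $D^2u\equiv-\tfrac1n\,\mathrm{Id}$. This is the same overall framework the paper uses to prove its Theorem~\ref{main thm}, of which Theorem~\ref{serrin thm} is the special case where $\partial\Omega\in C^2$; the paper itself only cites Serrin for Theorem~\ref{serrin thm}.

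The one substantive difference is in the rigidity step. You obtain $P\equiv\mathbf c^2$ from Green's second identity applied to the pair $(u,P)$, yielding $\int_\Omega u\,\Delta P\,dx=0$ and then invoking $u>0$, $\Delta P\ge0$; this is Weinberger's original route. The paper instead first proves a maximum principle $\sup_\Omega|\nabla u|\le\mathbf c$ (Proposition~\ref{max nabla u}, via a Green's function comparison), deduces $P\le\mathbf c^2$ from the weak maximum principle on the sublevel sets $\{u\ge\eta\}$, and then combines this with $\int_\Omega P\,dx=\mathbf c^2|\Omega|$ to force $P\equiv\mathbf c^2$. The reason for the detour is precisely the one you flag at the end as the "main obstacle": Green's second identity requires a boundary integral involving $\partial_\nu P$, i.e.\ second derivatives of $u$ up to the boundary, which is unavailable when $\partial\Omega$ is merely a set of finite perimeter (and, as the paper remarks in Lemma~\ref{lem:volume}, even the Pohozaev computation you use for the volume identity requires too much regularity there, so they replace it with a difference-quotient test-function argument). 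For $\partial\Omega\in C^2$ your exhaustion/mollification fix is the standard way to make the two identities rigorous, and your argument is a complete proof of the classical statement; just be aware that it does not survive the passage to the paper's rough setting, which is why the paper's variant of the same scheme is built differently.
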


This revelation marked the beginning of a burgeoning and fertile area of mathematics, in which the interplay of analysis and geometry gave rise to many applications that span the most diverse areas of mathematics and the natural sciences. Remarkably, the genesis of this field can be traced back to a particular result that intriguingly arose from two questions in mathematical physics: one concerning the torsion of a straight solid rod, and the other concerning the tangential stress of a fluid on the walls of a rectilinear pipe; this was the original motivation of Serrin as stated in \cite{S1971}.

Serrin's proof builds on and refines the original concept introduced by Alexandrov \cite{A1958, A1962}, known today as the ``moving plane method''. Subsequently, Weinberger \cite{W1971} presented an alternative proof of Theorem~\ref{serrin thm}. Inspired by Weinberger's approach, researchers have explored alternative methods to establish this result, as evidenced in \cite{BNST2008, CH1998, PS1989}

\subsection{Generalizations}
Subsequently, Garofalo and Lewis demonstrated a similar result in \cite{GL1999} for the $p$-Laplacian. Then, these results were extended to operators in divergence form of $p$-Laplacian type, and even to certain special cases of $\infty$-Laplacian, in \cite{FK2008, FGK2006, BK2011, CS2009}. Moreover, equations involving fully nonlinear operators of non-divergence form, such as $k$-Hessian equations \cite{BNST2008}, and problems in space forms \cite{M1991, KP1998, CF2015, CV2017, QX2017, CV2019, FR2022, GJY2023, GMY2023}, have garnered significant attention. We also record some recent results in \cite{ABM2024} about a version of Serrin's problem on planar ring domains, where the solutions are not necessarily radially symmetric, except when adding further conditions on the number of critical points.

Given the extensive body of literature surrounding Serrin's original problem, we can only provide a glimpse of the breadth of results here. We suggest that interested readers look at the surveys \cite{S2001, NT2018} and the references therein for a comprehensive overview.

On a separate note, the proof in \cite{CH1998}, relying on Alexandrov's theorem, initially unveiled a connection between the results of Alexandrov and Serrin. Subsequently, a deeper linkage has been explored by \cite{CM2017, MP2019,MP2020}. We also recommend the insightful survey \cite{M2017} which comprehensively investigates these findings. This connection has also appeared in overdetermined elliptic problems within unbounded domains, initially conjectured by Berestycki, Caffarelli, and Nirenberg \cite{BCN1997} for balls and cylinders, and eventually disproved in \cite{S2010}. Subsequently, a multitude of counterexamples have been constructed based on unbounded constant mean curvature surfaces. Given the focus of our paper on bounded domains, we refer to the survey \cite{S2022} for further elucidation on related results.

\subsection{Serrin's Theorem for more singular domains}
In 1992, Vogel \cite{V1992} proved that if $\Omega$ is a $C^1$ domain for which a solution to \eqref{serrin} exists, then $\Omega$ is actually $C^{2}$ and therefore Theorem~\ref{serrin thm} holds. To be precise, Vogel assumed that
$$u(x)\to 0 \quad \text{ and } \quad  |\nabla u|(x)\to \mathbf{c} \ \text{ uniformly as } x\to \partial \Omega,$$
and then applied the regularity theory of free boundary problems of Alt-Caffarelli type. We note that his assumption is stronger than just assuming the validity of \eqref{weak formulation} (see also Remark~\ref{rem:bernoulli} below).  

Later, Berestycki posed the following question:\\
{\it Suppose $\Omega$ is $C^2$ throughout except for a potential corner, and $u$ represents a strong solution to \eqref{serrin} everywhere except at said corner. Does Serrin's Theorem remain applicable in this scenario?}\\
This problem was solved in \cite{P1998} using an adapted moving plane method, strategically circumventing the exceptional point.
Subsequently, interest arose regarding the extension of Serrin's theorem to more general domains. In particular, in \cite[Question 7.1]{HLL2024} it was asked:\\
{\it Does Theorem~\ref{serrin thm} hold if $\Omega$ is merely Lipschitz and $u$ solves
\eqref{weak formulation}?}

\subsection{Main result}
In this paper, we give a positive answer to the above question and we actually prove the validity of Theorem~\ref{serrin thm} to a much wider class of domains. To state our result we first observe that, since $u \in W^{1,2}_0(\Omega)$, we can extend $u$ to zero outside of $\Omega$ and  rewrite
$$
\int_\Omega \nabla u\cdot \nabla\varphi\,dx = 
\int_{\R^n} \nabla u\cdot \nabla\varphi\,dx=-\int_{\R^n} \Delta u\,\varphi\,dx,
$$
where $\Delta u$ denotes the distributional of $u$ on $\R^n$.
Hence \eqref{weak formulation} is equivalent to asking
$$
u \in W^{1,2}(\R^n),\qquad u=0\quad \text{a.e. in }\R^n\setminus \Omega,\qquad \Delta u=\mathbf{c}\mathscr{H}^{n-1}|_{\partial \Omega} - \mathbf{1}_{\Omega}\,dx,
$$
where the last equality should be intended in the sense of distribution.

Note that the formula above does not require $\Omega$ to be open but could be any Borel set, provided that we have a good notion of boundary that allows one to perform integration by parts.
This naturally leads to the notion of sets of finite perimeter, where $\partial\Omega$ should be replaced by the so-called ``reduced boundary'' $\partial^*\Omega$.
Moreover, we need a version of connectedness for sets of finite perimeter, called indecomposability.
We refer to Section~\ref{sec:prelim} below for more details.

\begin{rem}
We choose to work with sets of finite perimeters because the proof of Serrin's theorem would not be significantly easier if we assumed \(\Omega\) to be a Lipschitz domain; the main concepts introduced in this paper would still be necessary. For readers who are not concerned with this level of generality, we suggest reading our paper with the assumption that \(\Omega\) is a Lipschitz domain, so that \(\partial^*\Omega\) corresponds to the set of points where the boundary is differentiable (which is true \(\mathscr{H}^{n-1}\)-a.e. by Rademacher's theorem).
\end{rem}

We can now state our main theorem.

\begin{thm}\label{main thm}
Let $\Omega\subset \mathbb R^n$ be a bounded indecomposable set of finite perimeter satisfying
\begin{equation}\label{domain boundary1}
\mathscr H^{n-1}(B_r(x)\cap \partial^* \Omega)\le A r^{n-1} \quad \text{ for $\mathscr H^{n-1}$-a.e. } x\in \partial^* \Omega \text{ and $r \in (0,1)$,} 
\end{equation}
for some constant $A>0$.
 Then $\Omega$ admits a solution $u\in W^{1,2}(\R^n)$ to 
\begin{equation}
\label{eq:weak set finite per}
u=0\quad \text{a.e. in }\R^n\setminus \Omega,\qquad \Delta u=\mathbf{c}\mathscr{H}^{n-1}|_{\partial^*\Omega} - \mathbf{1}_{\Omega}\,dx,
\end{equation}
 if and only if, up to a translation, 
 $\Omega$ is a ball of radius $R=R(n,\mathbf{c})$ and $u$ is given by \eqref{eq:sol u}. 
\end{thm}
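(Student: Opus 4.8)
The plan is to run a version of Weinberger's proof that is robust enough to survive the very weak regularity available here. The two natural ingredients are the $P$-function $P(x)=|\nabla u|^2+\tfrac{2}{n}u$ and the Pohozaev/Rellich integral identity, but both must be handled carefully because $u$ is only $W^{1,2}$, the boundary is only a reduced boundary, and we only know $\mathscr H^{n-1}(\partial^*\Omega)<\infty$ together with the density bound \eqref{domain boundary1}. First I would establish interior regularity: away from $\partial^*\Omega$, $u$ solves $\Delta u=-1$, so $u\in C^\infty$ in the interior of $\Omega$; by the maximum principle and Hopf's lemma $u>0$ in the interior of $\Omega$. The delicate point is boundary regularity: using the density upper bound \eqref{domain boundary1} and the fact that $\Delta u$ is a measure with bounded density with respect to $\mathscr H^{n-1}|_{\partial^*\Omega}$, I would show $u$ is Lipschitz up to the boundary (this is exactly the Alt--Caffarelli/Alt--Caffarelli--Friedman type estimate that the density bound is designed to provide), so that $\nabla u\in L^\infty$ and the traces of $|\nabla u|^2$ on $\partial^*\Omega$ make sense and equal $\mathbf c^2$ $\mathscr H^{n-1}$-a.e.

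Next I would prove the key integral identity. Testing the equation against $u$ gives $\int_\Omega |\nabla u|^2 = \int_\Omega u$ (the boundary term drops because $u=0$ on $\partial^*\Omega$ in the trace sense). Testing against $x\cdot\nabla u$ — justified via the finite-perimeter divergence theorem together with the Lipschitz bound — produces the Pohozaev identity, which after combining with the previous relation yields
\begin{equation}\label{eq:pohozaev-target}
\int_{\partial^*\Omega} \Big(|\nabla u|^2 - \mathbf c^2\Big)\, (x\cdot\nu)\, d\mathscr H^{n-1} = 0,
\end{equation}
and since $|\nabla u|=\mathbf c$ on $\partial^*\Omega$ this is automatically satisfied; the real output is rather the identity relating $\int_\Omega |\nabla u|^2$, $|\Omega|$ and $\mathbf c^2 \mathscr H^{n-1}(\partial^*\Omega)$ forced by scaling. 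In parallel I would run the Weinberger argument on $P$: compute that $\Delta P\ge 0$ in the interior (using the Cauchy--Schwarz inequality $|D^2u|^2\ge \tfrac{(\Delta u)^2}{n}$), so $P$ is subharmonic; show $P\le \mathbf c^2$ on $\partial^*\Omega$ in an appropriate sense; conclude $P\le \mathbf c^2$ in $\Omega$ by the maximum principle. Then the integral identity forces $\int_\Omega(\mathbf c^2-P)=0$, hence $P\equiv \mathbf c^2$, which makes all the Cauchy--Schwarz inequalities equalities and gives $D^2 u = -\tfrac1n \mathrm{Id}$, i.e. $u$ is a downward paraboloid $u(x)=\tfrac{R^2-|x-x_0|^2}{2n}$ on each connected component of the interior; indecomposability then forces a single component, and $\{u=0\}$ being a ball identifies $\Omega$ up to an $\mathscr H^{n-1}$-null set, which finite perimeter upgrades to equality.

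The main obstacle — and where most of the work lies — is making the two maximum-principle / integration-by-parts steps rigorous at the level of regularity allowed. Specifically: (i) proving the global Lipschitz bound for $u$ from \eqref{domain boundary1} alone, without assuming $\Omega$ Lipschitz; this is where the hypothesis genuinely enters and where one must invoke (or reprove) free-boundary-style estimates, controlling $u$ near $\partial^*\Omega$ by the density bound on the perimeter measure. (ii) Justifying that $P\le \mathbf c^2$ on the reduced boundary: one does not have classical boundary values of $\nabla u$, so one must argue via a boundary Harnack / blow-up analysis, showing that at $\mathscr H^{n-1}$-a.e. point of $\partial^*\Omega$ (a point of the reduced boundary, hence with an approximate tangent hyperplane) the blow-up of $u$ is the half-space solution $\mathbf c\, (x\cdot\nu)_+$, so that $P\to \mathbf c^2$ there, and then use the subharmonicity of $P$ together with a suitable notion of non-tangential maximal function or a capacitary argument to push the boundary bound into the interior. (iii) Handling slit discontinuities: points where $\Omega$ sits on both sides of $\partial^*\Omega$ with multiplicity; here the trace of $u$ from both sides vanishes, the perimeter counts the slit twice, and one checks the Pohozaev computation still closes. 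Once these analytic points are secured, the algebraic rigidity part (equality in Cauchy--Schwarz $\Rightarrow$ ball) is standard.
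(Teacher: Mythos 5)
Your outline correctly identifies Weinberger's $P$-function plus the volume identity as the skeleton, and correctly flags as the main obstacles (i) the Lipschitz bound and (ii) pushing $|\nabla u|\le\mathbf c$ from the reduced boundary into the interior. However, the mechanisms you propose for resolving (ii) and for deriving the volume identity are exactly where the argument would break down, and the paper's actual proof replaces both with something genuinely different.

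For the gradient bound, you propose to establish $P\le\mathbf c^2$ ``on $\partial^*\Omega$ in an appropriate sense'' and then invoke the maximum principle, possibly aided by ``a capacitary argument.'' This is the step that does not work as stated. Lemma~\ref{basic property u}(4) gives $|\nabla u|\to\mathbf c$ only at points of $\partial^*\Omega$, and you have no control at all over $\partial\Omega\setminus\partial^*\Omega$, which a priori could carry positive harmonic measure or even positive capacity; a subharmonicity-plus-maximum-principle argument on a rough open set requires information on a set whose complement in the topological boundary has zero harmonic measure, and that is precisely what is unknown here. The paper avoids this entirely: Proposition~\ref{max nabla u} expresses $\partial_e u(x)$ through a Green's function representation, testing~\eqref{eq:weak set finite per} against difference quotients of $G_x$. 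The crucial input is Lemma~\ref{greens function}, which shows that $\Delta G_x=\alpha\,\mathscr H^{n-1}|_{\partial^*\Omega}-\delta_x$ with $\alpha$ bounded --- i.e.\ that the harmonic measure of $\Omega$ is absolutely continuous with respect to $\mathscr H^{n-1}|_{\partial^*\Omega}$, so that $\partial\Omega\setminus\partial^*\Omega$ is invisible. Proving this absolute continuity is itself nontrivial and, remarkably, \emph{uses the existence of the solution $u$} via the comparison $G_x\le Mu$; it is not a general potential-theoretic fact about sets of finite perimeter. This is the paper's central new idea and it is absent from your proposal.

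For the volume identity, you propose to test the equation against $x\cdot\nabla u$ and invoke the finite-perimeter divergence theorem. But $x\cdot\nabla u$ is not an admissible test function for the distributional formulation: $\nabla u$ jumps across $\partial^*\Omega$ and has no continuous extension, so the pairing with $\mathscr H^{n-1}|_{\partial^*\Omega}$ is ill-defined, and the integrations by parts needed for Pohozaev require second derivatives that are measures. The paper explicitly notes that Weinberger's Pohozaev approach ``requires too much regularity on the solution $u$'' and instead proves Lemma~\ref{lem:volume} by testing against the genuinely Lipschitz difference-quotient function
\begin{equation*}
\varphi_\varepsilon(x)=\frac{u((1+\varepsilon)x)-u((1-\varepsilon)x)}{2\varepsilon}-2u(x),
\end{equation*}
computing $\Delta\varphi_\varepsilon$ as a measure, and passing to the limit using the blow-up from Lemma~\ref{basic property u}(4). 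Your proposal's idea is the right target identity, but the route to it needs to be the discrete one. In short: the overall Weinberger strategy is right, but the two analytic cores --- the Green's-function/harmonic-measure argument for the gradient bound and the difference-quotient proof of the volume identity --- are missing, and the replacements you suggest (direct maximum principle with a capacitary fix, direct Pohozaev) would not go through at this regularity.
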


\begin{rem}
Theorem~\ref{main thm} implies, in particular, the validity of Serrin's theorem for any domain $\Omega$ whose boundary $\partial\Omega$ satisfies \eqref{domain boundary1}.
Assumption \eqref{domain boundary1} is notable mild and encompasses many geometries, including Lipschitz domains, and it also allows for the presence of countably many cusps.
Consequently, Theorem~\ref{main thm} not only addresses \cite[Question 7.1]{HLL2024} but also includes all previously established results.
\end{rem}


\begin{rem}
\label{rem:bernoulli}
Our theorem closely aligns with the result in \cite{DM2019} regarding the validity of Alexandrov's theorem for sets of finite perimeter. However, despite the similarity, the two problems are distinct. In particular, while our theorem is non-trivial even when
$\Omega$ is a Lipschitz domain, the validity of Alexandrov's theorem for Lipschitz domains follows directly from elliptic regularity theory (which immediately implies that such domains must be smooth).

To elucidate this point, it is important to note that Serrin's problem is related to the one-phase Bernoulli problem (we refer to the recent monograph \cite{V2023} for more details). Consequently, one might consider leveraging its regularity theory to demonstrate that solutions to Serrin's problem in rough domains are smooth. Unfortunately, this regularity theory relies either on a minimality property or a viscosity-type approach. In particular, for the distributional formulation \eqref{eq:weak set finite per}, it is unclear to us if a priori there is any comparison principle valid, and we proved it by showing that the harmonic measure associated to $\Omega$ is absolutely continuous with respect to $\mathscr H^{n-1}|_{\partial^* \Omega}$ in Lemma~\ref{greens function}. 
Hence, it is currently unclear to us whether it can be applied in our context, even in the case where $\Omega$ is Lipschitz. 
Therefore, our proof utilizes techniques from geometric measure theory.
\end{rem}

\begin{rem}
\label{rem:slit}
One may wonder whether Theorem~\ref{serrin thm} also holds in the case of sets with slit discontinuities (usually called ``slit domains'', for example, a slit ball. These domains cannot be treated in the framework of sets of finite perimeters, since the latter are defined up to sets of measure zero. Moreover, in the presence of slits, one must properly define in which sense \eqref{serrin}
is satisfied. As we shall see in Section~\ref{sect:extension}, our method can be adapted to prove Serrin's theorem in this more general setting.
\end{rem}

\begin{rem}
Recently, the works  \cite{BF20231, BF20232} extended the method of moving planes to general measurable sets. However, that approach does not seem to readily adapt to our specific setting. Specifically, in Serrin's work, the application of the moving planes method relies on the assumption that the normal derivative is constant. In the weak setting \eqref{eq:weak set finite per}, this information is only available in an integral sense or at points of the reduced boundary, which complicates the application of this method.\\
Even more, consider for example the two-dimensional set shown in grey in the figure below and given, in polar coordinates, by the formula   $\Omega=\left\{ (r,\theta) \,:\, \frac{6}{5} - |\cos(2\theta)|^{1/4} \leq r \leq \frac{3}{2} \right\}$.

\noindent

\begin{minipage}{0.37\textwidth}

\vspace{0.1cm}
\begin{center}
  \includegraphics[width=0.6\textwidth]{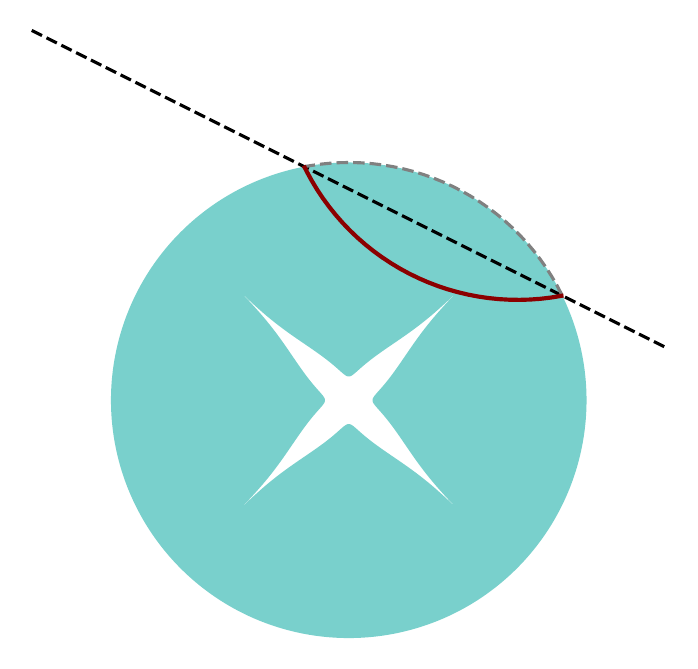}
  
\end{center}

\end{minipage}%
\begin{minipage}{0.6\textwidth}
\vspace{-0.2cm}
Applying the moving plane method to $\Omega$ leads to a reflection of the outer boundary that touches the inner boundary at one of the cusp points, where the Neumann boundary condition is not defined. This highlights a limitation of the moving plane method, which our approach overcomes. In particular, Theorem \ref{main thm} shows that such a domain $\Omega$ cannot be a solution to Serrin's problem.
\end{minipage}

\end{rem}

\medskip

The paper is structured as follows: In the next section, we collect some preliminary results that will be used in Section~\ref{sec:proof} to prove Theorem~\ref{main thm}. Then, Section~\ref{sect:extension} is devoted to the proof of Serrin's Theorem in slit domains, see Theorem~\ref{main thm 2} below.

\medskip

{\noindent \it Acknowledgments.}
The authors would like to thank Joaquim Serra for several discussions on this problem and Mingxuan Yang for feedback on an earlier version. The second author would like to express his gratitude to the Forschungsinstitut f\"ur Mathematik (FIM) for the warm hospitality and support during his visit, where this work was completed.


\section{Preliminary results}
\label{sec:prelim} 

In this section, we establish certain regularity results for $u$ and $\Omega$ and prove a volume identity that will be used in the proof of our main theorem. 
Before that, we recall the definition of sets of finite perimeter and their main properties.

    A measurable set $\Omega\subseteq \R^n$ is a set of \emph{finite perimeter} if the distributional gradient of its characteristics function $\mathbf{1}_\Omega$ is a $\R^n$-valued Radon measure $D\mathbf{1}_\Omega$ with finite total variation, i.e., $|D\mathbf{1}_\Omega|(\R^n)<\infty$.
It follows from the Lebesgue-Besicovitch theorem on differentiation of measures that for $|D\mathbf{1}|$-a.e. $x$, it holds
\begin{equation}\label{cond}
\lim_{r\to 0^+}\frac{D\mathbf{1}_\Omega(x+rB^n)}{|D\mathbf{1}_\Omega|(x+rB^n)} = \nu_x \quad \text{and} \quad |\nu_x| = 1.
\end{equation}
The set of points $x$ such that \eqref{cond} holds is called the \emph{reduced boundary of} $\Omega$ and denoted by $\partial^*\Omega$.
Also, at points of the reduced boundary, $\nu_x$  is the {\it measure-theoretic inner unit normal} to $\Omega$ at $x$.
According to De Giorgi Rectifiability Theorem, the reduced boundary is a $(n-1)$-rectifiable set.
Also, up to changing $\Omega$ in a set of measure zero, one can assume that $\overline{\partial^* \Omega}=\partial\Omega$.
Finally, a set of finite perimeter $E$ is said \emph{indecomposable} if for every $ F \subset  E$ having finite
 perimeter and such that
 $$\mathscr H^{n-1}(\partial^* E) = \mathscr H^{n-1}(\partial^* F) + \mathscr H^{n-1}(\partial^* (E\setminus F)), $$
 one has either $|F|=0$ or $|E\setminus F|=0$. 
We refer the interested reader to \cite{ACMM2001} and \cite[Sections 12 and 15]{M2012} for more details on sets of finite perimeter.

In the next lemma, $\mathring\Omega$ denotes the (topological) interior part of $\Omega$.

\begin{lem}\label{basic property u}
    Let $\Omega\subset \mathbb R^n$ be a bounded set of finite perimeter satisfying \eqref{domain boundary1},
    and let $u \in W^{1,2}(\R^n)$ satisfy \eqref{eq:weak set finite per}.
Then:
\begin{enumerate}
    \item[(1)] $u$ is $L$-Lipschitz continuous, with $L=L(n,\mathbf{c},A)$.
    \item[(2)] $u$ is nonnegative,  $\mathring\Omega=\{u>0\}$, and $u \in C^\infty(\mathring\Omega)$.
     \item[(3)] $\Omega=\mathring\Omega$ up to a set of measure zero. In particular, without loss of generality, we can assume $\Omega$ to be open.
\item[(4)] At every point $x\in \partial^*\Omega$ it holds
$$
\frac{u(x+rz)}{r} \to \mathbf{c}\big(\nu_x\cdot z \big)_+ \quad \text{ as }r\to 0,
$$
where $\nu_x$ denotes the measure-theoretic inner unit normal at $x$. 
\end{enumerate}
\end{lem}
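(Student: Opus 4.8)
The plan is to establish the four claims essentially in the order stated, using that $u\in W^{1,2}(\R^n)$ solves $\Delta u=\mathbf{c}\,\mathscr H^{n-1}|_{\partial^*\Omega}-\mathbf{1}_\Omega\,dx$ together with the perimeter density bound \eqref{domain boundary1}. For (1), I would split $u=v+w$ where $\Delta v=-\mathbf 1_\Omega\,dx$ and $\Delta w=\mathbf{c}\,\mathscr H^{n-1}|_{\partial^*\Omega}$ via Newtonian potentials. Since $\Omega$ is bounded, $v$ is $C^{1,\alpha}$ (indeed $C^{1,1-}$) with a gradient bound depending only on $n$ and $\diam\Omega$; the genuine work is to show $w$ is Lipschitz. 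Here \eqref{domain boundary1} enters decisively: it says the measure $\mu:=\mathbf{c}\,\mathscr H^{n-1}|_{\partial^*\Omega}$ is \emph{Ahlfors upper-regular} of codimension one, $\mu(B_r(x))\le C r^{n-1}$ for all $x$ and $r\in(0,1)$ (the a.e.\ bound upgrades to every point once we know $\overline{\partial^*\Omega}=\partial\Omega$ and use that $\mathscr H^{n-1}$-null portions don't contribute — a standard covering argument). A measure with codimension-one upper-Ahlfors regularity has a Newtonian potential whose gradient is bounded: $|\nabla w(x)|\le \int |x-y|^{1-n}\,d\mu(y)\lesssim \sum_{k\ge 0} 2^{-k(1-n)}\mu(B_{2^{-k}}(x))/\!\!$\,(dyadic annuli)$\,\lesssim C\sum_k 2^{-k(1-n)}2^{-k(n-1)}$, which converges. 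Adding the far-field contribution (where $|x-y|^{1-n}$ is bounded) and combining with the bound for $v$ gives the $L$-Lipschitz estimate with $L=L(n,\mathbf{c},A)$.

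For (2), nonnegativity: inside $\R^n\setminus\overline\Omega$, $u$ is harmonic and, being $W^{1,2}$ and vanishing a.e.\ outside $\Omega$, is identically $0$ on the unbounded component and on any bounded component of the complement (by the maximum principle / uniqueness, since $u\to 0$ at infinity). On $\mathring\Omega$ we have $\Delta u=-1<0$, so $u$ is superharmonic; since $u=0$ on $\partial\Omega$ and $u\ge 0$ would follow from the minimum principle, but $u$ vanishing on $\partial\Omega$ must be argued carefully because at this stage we only know $u=0$ a.e.\ outside $\Omega$ — I would use that Lipschitz $u$ vanishing a.e.\ on $\R^n\setminus\Omega$ actually vanishes on $\partial\Omega$ (continuity), then superharmonicity on $\mathring\Omega$ with boundary value $0$ forces $u\ge 0$ there; strict positivity and $\{u>0\}=\mathring\Omega$ follow from the strong minimum principle (a superharmonic function attaining its minimum $0$ at an interior point of a connected open set is constant, contradicting $\Delta u=-1$). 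Interior smoothness $u\in C^\infty(\mathring\Omega)$ is immediate from elliptic regularity since $\Delta u=-1$ there. For (3): $|\partial^*\Omega|=0$ (it is $(n-1)$-rectifiable hence Lebesgue-null), and one knows $\Omega$ differs from $\mathring\Omega\cup(\text{part of }\partial^*\Omega)$ only in a null set by the structure theory of finite-perimeter sets (points of density $1$); combining with the convention $\overline{\partial^*\Omega}=\partial\Omega$ gives $\Omega=\mathring\Omega$ up to measure zero, so we may relabel $\Omega:=\mathring\Omega$ as an open set.

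The heart of the lemma — and the step I expect to be the main obstacle — is (4), the blow-up statement at reduced-boundary points. Fix $x\in\partial^*\Omega$; by translating assume $x=0$, and by De Giorgi's theorem the rescaled sets $\Omega_r:=r^{-1}\Omega$ converge in $L^1_{loc}$ to the half-space $H_{\nu_0}:=\{z:\nu_0\cdot z>0\}$, while $r^{1-n}\mathscr H^{n-1}|_{\partial^*\Omega_r}\rightharpoonup\mathscr H^{n-1}|_{\partial H_{\nu_0}}$ weakly-$*$. Set $u_r(z):=u(rz)/r$. The family $\{u_r\}$ is uniformly $L$-Lipschitz by (1) and vanishes outside $\Omega_r$, so along subsequences $u_r\to u_0$ locally uniformly; I would identify $u_0$ by passing to the limit in the distributional equation: $\Delta u_r=\mathbf{c}\,r^{1-n}\mathscr H^{n-1}|_{\partial^*\Omega_r}-\mathbf 1_{\Omega_r}\,r^2\,dx$, and the rescaled volume term $r^2\mathbf 1_{\Omega_r}$ vanishes as $r\to 0$, so $\Delta u_0=\mathbf{c}\,\mathscr H^{n-1}|_{\partial H_{\nu_0}}$ with $u_0=0$ on the closed half-space $\overline{H_{\nu_0}^c}$, $u_0\ge 0$, $u_0$ harmonic in $H_{\nu_0}$, $L$-Lipschitz. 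The unique such function is $u_0(z)=\mathbf{c}(\nu_0\cdot z)_+$: indeed harmonicity in the half-space plus vanishing Dirichlet data and linear growth force $u_0$ to be a nonnegative linear function of $\nu_0\cdot z$ there (Liouville-type/boundary Harnack, or just reflect and use Liouville), and matching the distributional jump $\mathbf{c}\,\mathscr H^{n-1}$ across $\partial H_{\nu_0}$ pins the slope to exactly $\mathbf{c}$. Since the limit is independent of the subsequence, the full limit $u_r\to \mathbf{c}(\nu_0\cdot z)_+$ holds, which is precisely (4). The delicate points to get right are the uniqueness/classification of $u_0$ (ruling out, e.g., $u_0$ growing only in part of the half-space or carrying an additional harmonic-polynomial term — handled by the linear growth bound from Lipschitzness) and the legitimacy of interchanging the weak-$*$ convergence of the boundary measures with the PDE limit, for which the uniform perimeter bound $\mathscr H^{n-1}(B_1\cap\partial^*\Omega_r)=r^{1-n}\mathscr H^{n-1}(B_r\cap\partial^*\Omega)\le A$ from \eqref{domain boundary1} provides the needed compactness.
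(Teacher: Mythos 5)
Your treatment of (2) and (4) matches the paper's approach (blow-up, weak convergence of the rescaled boundary measures, Liouville on a half-space, and matching the jump of the distributional Laplacian to pin the slope). But parts (1) and (3) have genuine gaps.

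For (1), the Newtonian-potential splitting $u=v+w$ does not close as written. With $\mu=\mathbf c\,\mathscr H^{n-1}|_{\partial^*\Omega}$, the absolute-value bound $|\nabla w(x)|\le\int|x-y|^{1-n}\,d\mu(y)$ diverges at points near $\mathrm{supp}\,\mu$: on the dyadic annulus $\{2^{-k-1}\le|x-y|\le 2^{-k}\}$ one has $|x-y|^{1-n}\approx 2^{k(n-1)}$ while $\mu\lesssim A\,2^{-k(n-1)}$, so each term is $\approx 1$ and $\sum_k 2^{-k(1-n)}2^{-k(n-1)}=\sum_k 1=\infty$ (contrary to the claimed convergence). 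Compare the model case $\mu=\mathscr H^{n-1}$ restricted to a disk in a hyperplane: $\int|x-y|^{1-n}\,d\mu(y)=\int_0^1 r^{-1}\,dr=\infty$ for $x$ on the plane. What you actually get this way is $|\nabla w(x)|\lesssim\log(1/\dist(x,\partial\Omega))$, hence $C^{0,\alpha}$ for every $\alpha<1$, not Lipschitz. The genuine Lipschitz bound for single-layer potentials uses cancellation in the vector kernel $\frac{x-y}{|x-y|^n}$, a Calder\'on--Zygmund argument that for merely rectifiable supports is considerably harder than anything needed here. The paper sidesteps this entirely: it uses the identity $\frac{d}{dr}\bint_{\partial B_r(x)}u\,d\mathscr H^{n-1}=\frac{\Delta u(B_r(x))}{n|B_1|r^{n-1}}$ together with $\Delta u(B_r(x))\le A\mathbf c\,r^{n-1}$ from \eqref{domain boundary1} and $u=0$ a.e.\ off $\Omega$ to get $\bint_{B_r(x)}u\lesssim r$ for every $x\in\partial\Omega$; interior estimates for $\Delta u=-1$ then upgrade this linear growth to a global Lipschitz bound with constant $L(n,\mathbf c,A)$.

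For (3), structure theory alone does not give $|\Omega\triangle\mathring\Omega|=0$: Federer's theorem controls $\R^n\setminus(\Omega^{(1)}\cup\Omega^{(0)}\cup\partial^*\Omega)$ only up to $\mathscr H^{n-1}$-null sets, but $\Omega^{(1)}\setminus\mathring\Omega\subset\partial\Omega$ could still have positive Lebesgue measure (the topological boundary can be fat even when the reduced boundary is thin). The step is forced by the PDE: since $u\ge 0$ and $\Delta u=-1$ on $\{u>0\}=\mathring\Omega$, one has $\Delta u=\mu_+-\mathbf 1_{\mathring\Omega}\,dx$ with $\mu_+\ge 0$ supported on $\{u=0\}$; comparing the absolutely continuous parts with $\Delta u=\mathbf c\mathscr H^{n-1}|_{\partial^*\Omega}-\mathbf 1_\Omega\,dx$ and using $\mathring\Omega\subset\Omega$ forces $\mathbf 1_\Omega=\mathbf 1_{\mathring\Omega}$ a.e. You need this comparison; the topological/measure-theoretic argument is not enough. (A minor slip in (4): the rescaled equation is $\Delta v_r=\mathbf c\,\mathscr H^{n-1}|_{\partial^*\Omega_{x,r}}-r\,\mathbf 1_{\Omega_{x,r}}\,dz$, linear in $r$, not $r^2$; the term still vanishes, so the conclusion is unaffected.)
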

\begin{proof}
Equation \eqref{eq:weak set finite per} implies that $\Delta u$ is a Radon measure. Also, it follows from \eqref{domain boundary1} that, given $x \in \partial^* \Omega$,
$$
\Delta u(B_r(x))\leq \big(\Delta u+\mathbf{1}_{\Omega}\big)(B_r(x))\le A\mathbf{c} r^{n-1} \quad \forall\,r \in (0,1).
$$
Since $u=0$ a.e. outside $\Omega$,  the classical identity
\begin{equation}
\label{eq:d dr u Delta}
\frac{d}{dr}\bint_{\partial B_r(x)} u\, d\mathscr H^{n-1}=\frac{\Delta u(B_r(x))}{n|B_1|r^{n-1}}
\end{equation}
(see for instance the proof of \cite[Lemma 3.10]{V2023})
combined with the bound above 
implies that 
$$\bint_{\partial B_r(x)} u\, d\mathscr H^{n-1}\le   C(n,A,\mathbf{c})  r \qquad \text{for }\,x \in \partial^* \Omega,\,r \in (0,1),$$
therefore
$$\bint_{B_r(x)} u(y)\, dy\le   C(n,A,\mathbf{c})  r \qquad \text{for }\,x \in \partial^* \Omega,\,r \in (0,1).$$
Since the map $x \mapsto \bbint_{B_r(x)} u(y)\, dy$ is continuous for $r>0$ fixed and $\overline{\partial^* \Omega}=\partial\Omega$, we deduce that
$$\bint_{B_r(x)} u(y)\, dy\le   C(n,A,\mathbf{c})  r \qquad \text{for }\,x \in \partial \Omega,\,r \in (0,1).$$
Recalling that $\Delta u=-1$ inside $\mathring\Omega$, interior regularity estimates imply that $u$ is uniformly Lipschitz continuous function inside $\mathring\Omega$,
see for instance the proof of \cite[Lemma 3.5]{V2023}.
Therefore, since $u$ vanishes on $\partial\Omega$ and $u=0$ a.e. outside $\Omega$, $u$ is globally Lipschitz, which proves (1).

Since $\Delta u=-1<0$ inside $\mathring \Omega$, the strong maximum principle implies that the set $\{u>0\}$ is inside $\mathring\Omega$. Since $u$ vanishes outside $\mathring\Omega$, this shows that $\{u>0\}=\mathring\Omega$. The smoothness of $u$ inside $\mathring\Omega$ follows immediately from the equation $\Delta u=-1.$ This proves (2).

Note that, since $u \geq 0$, the distributional Laplacian of $u$ (which we know to be a Radon measure) is non-negative in the set $\{u=0\}=\R^n\setminus \mathring\Omega$. Also, inside $\mathring\Omega=\{u>0\}$, $\Delta u=-1$. This implies that $$\Delta u= \mu_+- \mathbf{1}_{\mathring\Omega}\,dx,
$$ for some nonnegative measure $\mu_+$. Comparing this equation with  \eqref{eq:weak set finite per}, we conclude that $\mu_+=\mathbf{c}\mathscr{H}^{n-1}|_{\partial^*\Omega}$ and $\mathbf{1}_{\Omega}\,dx=\mathbf{1}_{\mathring\Omega}\,dx$. This last equality implies that $\Omega$ and $\mathring\Omega$ coincide a.e., proving (3).


Finally, we prove (4). Given $x\in \partial^*\Omega$, we consider the sequence of Lipschitz functions
$$
v_r(z)=\frac{u(x+rz)}{r},\qquad \Delta v_r=\mathbf{c}\mathscr{H}^{n-1}|_{\partial^* \Omega_{x,r}}-r\mathbf{1}_{\Omega_{x,r}}dx,
$$
where $\Omega_{x,r}=\frac{\Omega-x}{r}=\{z \in \R^n\,:\,x+rz \in \Omega\}$.
Then, since $x$ belongs to  $\partial^*\Omega$, $\Omega_{x,r} \to H_x$, where $H_x$ is a half space.
In addition, for any converging subsequence $r_i \to 0$, the Lipschitz functions $v_{r_i}$ 
converge to a nonnegative Lipschitz function $v_0$ satisfying
$$
\Delta v_0=0\quad \text{in }H_x,\qquad  v_0=0\quad \text{in }\R^n\setminus H_x.
$$
Then, Liouville Theorem implies that $v_0(z)=\mathbf{a}(\nu_x\cdot z)_+$ for some $\mathbf{a} \geq 0$.

On the other hand, using again $x\in \partial^*\Omega$, it follows that $\mathscr{H}^{n-1}|_{\partial^* \Omega_{x,r}} \rightharpoonup \mathscr{H}^{n-1}|_{\partial H_x}$.
Hence $\Delta v_0=\mathbf{c}\mathscr{H}^{n-1}|_{\partial H_x}$. Combining these two facts, we conclude that $\mathbf{a}=\mathbf{c}$.
This shows that $v_{r_i}\to \mathbf{c}(\nu_x\cdot z)_+$ for any converging subsequence $r_i$, so the entire sequence $v_r$ converges to $\mathbf{c}(\nu_x\cdot z)_+$, as desired.


\end{proof}

We now demonstrate the following volume identity that will play a crucial role in our proof. In the classical setting \cite{W1971}, this identity is proved via a Pohozaev's approach. Unfortunately, this approach requires too much regularity on the solution $u$, so a new proof is needed.

\begin{lem}\label{lem:volume}
Let $\Omega\subset \mathbb R^n$ be a bounded set of finite perimeter satisfying \eqref{domain boundary1},
    and let $u \in W^{1,2}(\R^n)$ satisfy \eqref{eq:weak set finite per}. Then 
\begin{equation}\label{volume}
    (n+2)\int_{\Omega} u\, dx =\mathbf{c}^2 n |\Omega|. 
\end{equation}
\end{lem}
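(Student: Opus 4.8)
The plan is to obtain \eqref{volume} by testing the distributional equation \eqref{eq:weak set finite per} against two different test functions and combining the resulting identities. The first, and easiest, choice is to test against $u$ itself (after a routine approximation, since $u$ is only Lipschitz and compactly supported, hence in $W^{1,2}$ with the right decay): this gives
\begin{equation*}
-\int_{\R^n}|\nabla u|^2\,dx=\langle \Delta u,u\rangle=\mathbf{c}\int_{\partial^*\Omega}u\,d\mathscr H^{n-1}-\int_\Omega u\,dx=-\int_\Omega u\,dx,
\end{equation*}
where the boundary integral vanishes because $u=0$ on $\partial\Omega$ by Lemma~\ref{basic property u}(2). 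Hence $\int_{\R^n}|\nabla u|^2\,dx=\int_\Omega u\,dx$. The second, crucial, choice is to test against the vector field $x\mapsto x\cdot\nabla u(x)$, or equivalently to run a Pohozaev/Rellich-type scaling identity; the content of the lemma is precisely a distributional version of Pohozaev's identity, and the difficulty is that $u$ does not have enough regularity up to $\partial\Omega$ to integrate by parts classically there.

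To make the Pohozaev computation rigorous I would argue by a domain-scaling (inner-variation) argument rather than by brute-force integration by parts. Concretely, for $t$ near $1$ consider $u_t(x):=t^{-2}u(tx)$; one checks that $u_t$ is the solution associated to the rescaled domain $\Omega/t$ with the same constant $\mathbf c$ (this is exactly the computation already done in the proof of Lemma~\ref{basic property u}(4), where $\Delta\big(t^{-2}u(tx)\big)=\mathbf c\,\mathscr H^{n-1}|_{\partial^*(\Omega/t)}-\mathbf 1_{\Omega/t}\,dx$). Now differentiate in $t$ at $t=1$ the two quantities appearing in the identity obtained by pairing $\Delta u_t$ with $u_t$, namely
\begin{equation*}
\int_{\R^n}|\nabla u_t|^2\,dx=\int_{\Omega/t}u_t\,dx.
\end{equation*}
Both sides are explicit functions of $t$ via the change of variables $y=tx$: the left side equals $t^{2-n}\int|\nabla u|^2$ (since $\nabla u_t(x)=t^{-1}\nabla u(tx)$), and the right side equals $t^{-2-n}\int_\Omega u\,dy$. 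Differentiating at $t=1$ gives $(2-n)\int|\nabla u|^2=(-2-n)\int_\Omega u\,dy$; combined with the first identity $\int|\nabla u|^2=\int_\Omega u$, this already forces $\int_\Omega u=0$, which is wrong — so this naive pairing does not see the boundary term, and the scaling must instead be fed into a genuine Pohozaev identity.

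The correct route, and the one I expect to be the technical heart of the proof, is: differentiate in $t$ the pairing $\langle \Delta u_t, u\rangle$ (keeping the \emph{second} slot fixed) at $t=1$, i.e. compute $\frac{d}{dt}\Big|_{t=1}\int_{\R^n}\nabla u_t\cdot\nabla u\,dx$ in two ways. On one hand, $\frac{d}{dt}\big|_{t=1}u_t=-2u-x\cdot\nabla u$, so the derivative equals $-\int_{\R^n}\nabla(2u+x\cdot\nabla u)\cdot\nabla u\,dx$, which after an integration by parts purely in the smooth region $\mathring\Omega$ (valid since $u\in C^\infty(\mathring\Omega)$ and everything is compactly supported, with no boundary contribution because we pair against $\nabla u\in L^2$) produces a bulk term involving $\int_\Omega|\nabla u|^2$ and $\int_\Omega u\,\Delta u=-\int_\Omega u$. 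On the other hand, using the explicit form $\Delta u_t=\mathbf c\,\mathscr H^{n-1}|_{\partial^*(\Omega/t)}-\mathbf 1_{\Omega/t}\,dx$ and Lemma~\ref{basic property u}(4) (which identifies the blow-up and hence lets one compute $\frac{d}{dt}\big|_{t=1}$ of the boundary measure $\mathbf c\,\mathscr H^{n-1}|_{\partial^*(\Omega/t)}$ paired against the fixed smooth-near-the-boundary-in-a-blow-up sense function $u$), the derivative produces a \emph{boundary} term equal to a multiple of $\mathbf c^2\mathscr H^{n-1}(\partial^*\Omega)$ together with $\mathbf c^2 n|\Omega|$ after using the first variation of perimeter and the divergence theorem $\int_{\partial^*\Omega}(x\cdot\nu_x)\,d\mathscr H^{n-1}=n|\Omega|$ for sets of finite perimeter. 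Equating the two expressions and using $\int_\Omega|\nabla u|^2=\int_\Omega u$ eliminates the $\mathscr H^{n-1}(\partial^*\Omega)$ and $\int_\Omega|\nabla u|^2$ terms and leaves exactly $(n+2)\int_\Omega u=\mathbf c^2 n|\Omega|$. The main obstacle is justifying the differentiation of the singular boundary measure $\mathscr H^{n-1}|_{\partial^*(\Omega/t)}$ in $t$ against a fixed test function with only Lipschitz regularity across $\partial\Omega$; this is where Lemma~\ref{basic property u}(4) is essential, since it provides, $\mathscr H^{n-1}$-a.e. on $\partial^*\Omega$, the precise linear blow-up $u(x+rz)/r\to\mathbf c(\nu_x\cdot z)_+$ that makes the limit $\lim_{t\to1}\frac1{t-1}\big(\mathbf c\int_{\partial^*(\Omega/t)}u\,d\mathscr H^{n-1}-\mathbf c\int_{\partial^*\Omega}u\,d\mathscr H^{n-1}\big)$ computable via a dominated-convergence argument and the area formula, with the $L$-Lipschitz bound of Lemma~\ref{basic property u}(1) and the density bound \eqref{domain boundary1} supplying the uniform integrable majorant.
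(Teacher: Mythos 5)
Your overall plan — a distributional Pohozaev/scaling identity, with the blow-up Lemma~\ref{basic property u}(4) and the density bound \eqref{domain boundary1} supplying the pointwise limits and the dominated-convergence majorant — is exactly the right idea, and it is what the paper does. However, the specific differentiation scheme you propose has two genuine gaps.

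First, the claim that the integration by parts of $\int\nabla(x\cdot\nabla u)\cdot\nabla u\,dx$ in $\mathring\Omega$ produces ``no boundary contribution because we pair against $\nabla u\in L^2$'' is wrong. Membership of $\nabla u$ in $L^2$ does not kill boundary terms; what matters is the trace of the product on $\partial^*\Omega$, and by the very blow-up you invoke, $|\nabla u|\to\mathbf{c}\neq 0$ on $\partial^*\Omega$. Carrying out the integration by parts on an exhaustion $\{u>\eta\}$ and letting $\eta\to 0$ leaves a nonzero flux term proportional to $\mathbf{c}^2\int_{\partial^*\Omega}(x\cdot\nu_x)\,d\mathscr H^{n-1}$, which in fact is the boundary term you need — so the bookkeeping cannot be done by declaring it absent.

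Second, and more seriously, the one-sided derivative $\frac{d}{dt}\big|_{t=1}\langle\Delta u_t, u\rangle$ that your plan requires need not exist. Writing $\langle\Delta u_t, u\rangle=\mathbf{c}\int_{\partial^*(\Omega/t)}u\,d\mathscr H^{n-1}-\int_{\Omega/t}u\,dx$, the first term involves $t\mapsto u(y/t)$ for $y\in\partial^*\Omega$. By Lemma~\ref{basic property u}(4), $u$ behaves like $\mathbf{c}(\nu_y\cdot z)_+$ near $y$: it is a cone, not a $C^1$ function, so $t\mapsto u(y/t)$ has a genuine corner at $t=1$ whose left and right derivatives are $-\mathbf{c}(\nu_y\cdot y)_+$ and $\mathbf{c}(\nu_y\cdot y)_-$ respectively. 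These are generally unequal, so the classical $t$-derivative does not exist, and ``differentiating at $t=1$'' is ill-defined as stated. The paper circumvents exactly this by never differentiating in $t$: it tests \eqref{eq:weak set finite per} against the \emph{symmetric} difference quotient
$$\varphi_\ez(x)=\frac{u((1+\ez)x)-u((1-\ez)x)}{2\ez}-2u(x),$$
a bona fide Lipschitz function for each fixed $\ez>0$, so that $\langle\Delta u,\varphi_\ez\rangle=\langle u,\Delta\varphi_\ez\rangle$ is an exact identity. Only after combining the boundary contributions of $\Delta\varphi_\ez$ (on the rescaled boundaries) with those of $\Delta u$ (on $\partial^*\Omega$) into the auxiliary function $\psi_\ez$ of \eqref{eq:psi eps} does the paper pass to the limit; the centered quotient then averages the two one-sided cone slopes into the well-defined quantity $\nu_x\cdot x=(\nu_x\cdot x)_+-(\nu_x\cdot x)_-$, and dominated convergence applies thanks to the global Lipschitz bound of Lemma~\ref{basic property u}(1). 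To salvage your argument you would need to replace the one-sided $t$-derivative by precisely this symmetric quotient and then reproduce the paper's algebraic bookkeeping.

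A minor point: the preliminary identity $\int_{\R^n}|\nabla u|^2=\int_\Omega u$ that you derive by testing against $u$ is correct and is used later in the proof of Theorem~\ref{main thm}, but it plays no role in establishing \eqref{volume} itself.
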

\begin{proof}
Thanks to Lemma~\ref{basic property u}(3), we can assume that $\Omega$ is open.
    Given $\ez>0$, consider
\begin{equation}
    \label{eq:varphi eps}
\varphi_\ez(x)= \frac{u((1+\ez) x)-u((1-\ez)x)}{2\ez} - 2u(x).
\end{equation}
Note that $\varphi_\ez$ is Lipschitz continuous for $\ez>0$ fixed, and that $|\varphi_\ez|\leq C=C(L,\Omega)$ for all $\ez>0$ (by the Lipschitz continuity of $u$). 
Therefore, testing \eqref{eq:weak set finite per} against $\varphi_\ez$ we get
\begin{equation}
    \label{eq:test vol}
\mathbf{c} \int_{\partial^*\Omega}  \varphi_\ez \, d\mathscr H^{n-1} -\int_{\Omega} \varphi_\ez\, dx
= - \int_{\Omega} \nabla u\cdot \nabla\varphi_\ez \, dx =  \int_{\Omega} u \Delta  \varphi_\ez \, dx.  
\end{equation}
We first want to compute the Laplacian of $ \varphi_\ez$. To this aim, we define
$$\mathcal{N}_\ez:=\left\{ x\in \R^n\colon \dist(x,\partial \Omega)\leq C_0\ez \right\}$$
with $C_0={\rm diam}(\Omega)$, so that
$$
x,(1+\ez)x,(1-\ez)x \in \Omega \qquad \text{for all }x \in \Omega\setminus \mathcal N_\ez.
$$
Hence, noticing that $\frac{(1+\ez)^2-(1-\ez)^2}{2\ez}-2=0$, it follows from
\eqref{eq:weak set finite per} that
\begin{multline*}
    \Delta   \varphi_\ez(x)=\mathbf{c} \frac{(1+\ez)^2\mathscr H^{n-1}|_{(1+\ez)^{-1}\partial^*\Omega} - (1-\ez)^2\mathscr H^{n-1}|_{(1-\ez)^{-1}\partial^*\Omega}}{2\ez} \\
    -2\mathbf{c}\mathscr H^{n-1}|_{\partial^*\Omega} +O\biggl(\frac {1} {\ez} dx  |_{\mathcal N_\ez} \biggr).
\end{multline*} 
Therefore, by a change of variables,
\begin{equation}
\begin{split}
    &\int_\Omega u \Delta   \varphi_\ez\,dx\\
    &=
    \frac{\mathbf{c} }{2\ez}\left[\int_{(1+\ez)^{-1}\partial^*\Omega}  (1+\ez)^2 u \,d \mathscr H^{n-1} - \int_{(1-\ez)^{-1}\partial^*\Omega}    (1-\ez)^2 u\,d \mathscr H^{n-1} \right]+\int_{\mathcal N_\ez} u \,O\biggl(\frac {1} {\ez}\biggr) \, dx\\
&=\mathbf{c}\left[\int_{ \partial^*\Omega}   \frac{(1+\ez)^{3-n} u((1+\ez)^{-1} x) -(1-\ez)^{3-n}  u( (1-\ez)^{-1} x)}{2\ez} \,d \mathscr H^{n-1}(x)\right]+\int_{\mathcal N_\ez} u\, O\biggl(\frac {1} {\ez}\biggr) \, dx.\label{varphi eps}
    \end{split}
\end{equation} 
Hence, if we define
\begin{equation}
    \label{eq:psi eps}
\psi_\ez(x)=-\frac{(1+\ez)^{3-n} u((1+\ez)^{-1} x) -(1-\ez)^{3-n}  u( (1-\ez)^{-1} x)}{2\ez}+\varphi_\ez(x),
\end{equation}
it follows from \eqref{eq:test vol} that
\begin{equation}
    \label{eq:test vol2}
\mathbf{c} \int_{\partial^*\Omega}  \psi_\ez \, d\mathscr H^{n-1} -\int_{\Omega} \varphi_\ez\, dx
=  \int_{\mathcal N_\ez} u\, O\biggl(\frac {1} {\ez}\biggr) \, dx.
\end{equation}
Note now that, applying Lemma~\ref{basic property u}(4),  for $x\in \partial^*\Omega$ we have
$$
\psi_\ez(x) 
\to - \mathbf{c} \big((\nu_x\cdot x)_+ - (\nu_x\cdot x)_-\big)=-\mathbf{c}\nu_x\cdot x.
$$
Thus, by dominated convergence (recall that $|\varphi_\ez|\leq C$)
$$
\mathbf{c} \int_{\partial^*\Omega}  \psi_\ez \, d\mathscr H^{n-1} \to 
-\mathbf{c}^2 \int_{\partial^*\Omega} \nu_x\cdot x \, d\mathscr H^{n-1}\qquad \text{as }\ez \to 0.
$$
Also, since $\varphi_\ez(x) \to \nabla u(x)\cdot x-2u(x)$ inside $\Omega$ (recall that, without loss of generality, we can assume that $\Omega$ is open), by dominated convergence and an integration by parts  we have
$$
\int_{\Omega} \varphi_\ez\, dx \to \int_{\Omega} \big(\nabla u\cdot x-2u\big)\, dx=-(n+2)\int_{\Omega} u\, dx\qquad \text{as }\ez \to 0.
$$
Finally, since $u$ is Lipschitz and vanished on $\partial\Omega$ we deduce that 
$u=O(\ez)$ inside $\mathcal N_\ez$,
therefore
$$\int_{\mathcal N_\ez} u \,O\biggl(\frac {1} {\ez}\biggr) \, dx=O(|\mathcal N_\ez|)\to 0\qquad \text{as }\ez \to 0$$
(the convergence $|\mathcal N_\ez|\to 0$ is a simple consequence of dominated convergence, since $\mathcal N_\ez\to \emptyset$ as $\ez \to 0$).
Combining all together, we proved that
$$
-\mathbf{c}^2 \int_{\partial^*\Omega} \nu_x\cdot x \, d\mathscr H^{n-1}=(n+2)\int_\Omega u\,dx.
$$
Finally, by the divergence theorem for sets of finite perimeter (recall that $\nu_x$ is the inner normal),
$$  -  \int_{\partial^*\Omega}  \nu_x \cdot x \, d\mathscr H^{n-1}=  \int_{\Omega}{\rm div}(x)\,dx=n|\Omega|,$$
which concludes the proof.
\end{proof}

\section{Proof of Theorem~\ref{main thm}}
\label{sec:proof}
Our proof of Theorem~\ref{main thm} relies on Proposition~\ref{max nabla u} below, stating that $|\nabla u|\le \mathbf{c}$ in $\Omega$. 
Proving this fact is nontrivial for two reasons:\\
(i) in a rough domain as in our setting, the standard maximum principle for $|\nabla u|$ is not available;\\
(ii) in our situation, at least formally, Lemma~\ref{basic property u}(4) tells us that $|\nabla u|=\mathbf{c}$ on the reduced boundary, but we do not have any information at points of $\partial\Omega\setminus \partial^*\Omega$. \\
To circumvent these difficulties and to prove Proposition~\ref{max nabla u}, we shall carefully exploit the properties of the Green function of $\Omega$. As we shall see, to prove Lemma~\ref{greens function} we will crucially use the fact that a solution to Serrin's problem exists in $\Omega.$

\begin{rem} A Green function approach to prove a maximum principle on $|\nabla u|$ has already been used, in a similar context, in \cite[Theorem 6.3]{AC1981}. There, however, the authors assume that solutions are non-degenerate, which is something that we do not have in our context. For this reason, our proofs are completely different.
\end{rem}

Recall that, due to Lemma~\ref{basic property u}(3), we can assume that $\Omega$ is open.
To define the Green function, we consider an increasing sequence $\Omega_k$ of smooth sets contained inside $\Omega$ such that $\Omega_k \to \Omega$ as $k \to \infty$.
Then, given $x \in \Omega$, we note that $x \in \Omega_k$ for $k$ sufficiently large, so we can define $G_{x,k}$ as the solution of
$$
\left\{
\begin{array}{ll}
\Delta G_{x,k}=-\delta_{x},& \text{in }\Omega_k\\
G_{x,k}=0& \text{in }\R^n\setminus \Omega_k
\end{array}.
\right.
$$
Noticing that $G_{x,k}\leq G_{x,j}$ for $k<j$ (by the maximum principle), we can define 
$$G_x:=\lim_{k\to \infty}G_{x,k}.$$ 

\begin{lem}\label{greens function}
    Let $\Omega$ be an open bounded set satisfying the assumptions in Theorem~\ref{main thm}, and let a solution $u$ of \eqref{eq:weak set finite per} exist. Fix $x\in \Omega$, and let $G_x$ be the Green function constructed above. Then:
    \begin{enumerate}
    \item[(1)] $G_x$ is Lipschitz continuous near $\partial \Omega$, with the Lipschitz constant depending only on $n$, $A$, $\mathbf{c}$, $\Omega$, and $x$. Moreover, there exists a bounded measurable function $\az:\partial^*\Omega \to [0,\infty)$ such that
    $$\Delta G_x= \az\,\mathscr H^{n-1}|_{\partial^*\Omega}-\delta_x.$$     
    \item[(2)]  At every point $y\in \partial^*\Omega$ it holds
$$
\frac{G_x(y+rz)}{r} \to \mathbf{a}_y\big(\nu_y\cdot z \big)_+ \quad \text{ as }r\to 0,
$$
where $\nu_y$ denotes the measure-theoretic inner unit normal at $y$ and $\mathbf{a}_y=\az(y)$. 
    \end{enumerate}
\end{lem}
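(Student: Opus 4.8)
The heart of the matter is to compare $G_x$ with the solution $u$ of Serrin's problem: $u$ is the only object at our disposal that encodes the boundary behaviour forced by \eqref{eq:weak set finite per}, and this is precisely where the hypothesis that \eqref{eq:weak set finite per} is solvable on $\Omega$ enters. First I would record the basic structure of $G_x$. Passing to the limit in $\Delta G_{x,k}=-\delta_x+\sigma_k$, where $\sigma_k=-\partial_\nu G_{x,k}\,\mathscr H^{n-1}|_{\partial\Omega_k}\ge0$ are probability measures supported in a fixed bounded set, gives $\Delta G_x=-\delta_x+\mu$ for some probability measure $\mu\ge0$; since $G_x$ is harmonic in $\Omega\setminus\{x\}$ and vanishes identically on the open set $\R^n\setminus\overline\Omega$, the measure $\mu$ is supported on $\partial\Omega$. (The monotone limit $G_x$ is finite off $x$, being dominated by the Green function of a large ball, so this is legitimate.)

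Next, I would prove the comparison $G_x\le C(x)\,u$ near $\partial\Omega$ together with the Lipschitz bound. Fix $\rho>0$ with $\overline{B_\rho(x)}\subset\Omega$ and put $M:=\max_{\partial B_\rho(x)}G_x>0$, $m:=\min_{\overline{B_\rho(x)}}u>0$. On $\Omega_k\setminus\overline{B_\rho(x)}$ the function $\tfrac{M}{m} u-G_{x,k}$ has Laplacian $-M/m<0$ and nonnegative boundary values on $\partial\Omega_k$ (where $G_{x,k}=0$, $u\ge0$) and on $\partial B_\rho(x)$ (where $\tfrac{M}{m} u\ge M\ge G_{x,k}$), hence is $\ge0$ by the minimum principle; letting $k\to\infty$ yields $G_x\le\tfrac{M}{m} u$ on $\Omega\setminus\overline{B_\rho(x)}$. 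Since $u$ is Lipschitz and vanishes on $\partial\Omega$ (Lemma~\ref{basic property u}), we get $G_x(y)\le C(x)\dist(y,\partial\Omega)$ near $\partial\Omega$; feeding this into interior gradient estimates for the harmonic function $G_x$ on the balls $B_{\dist(y,\partial\Omega)/2}(y)$ bounds $|\nabla G_x|$ near $\partial\Omega$, and since $G_x$ is continuous and vanishes on $\partial\Omega$ it is Lipschitz in a neighbourhood $N$ of $\partial\Omega$ with a constant of the asserted dependence. This gives the first half of (1).

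To finish (1) I would show $\mu=\az\,\mathscr H^{n-1}|_{\partial^*\Omega}$ with $\az$ bounded. Set $C:=M/m$ and $h:=Cu-G_x$, and choose $N$ disjoint from $B_\rho(x)$; then $h\ge0$ and Lipschitz on $N$, $h=0$ on $\partial\Omega$ and on $N\setminus\overline\Omega$, and by \eqref{eq:weak set finite per} and $\Delta G_x=\mu$ on $N$ one has $\Delta h=\sigma-C\mathbf 1_\Omega\,dx$ on $N$, where $\sigma:=C\mathbf c\,\mathscr H^{n-1}|_{\partial^*\Omega}-\mu$ is a signed measure concentrated on $\partial\Omega$. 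Testing against $\phi\in C_c^\infty(N)$ with $\phi\ge0$ and applying the Gauss--Green formula for the $L^\infty$ divergence-measure field $h\nabla\phi-\phi\nabla h$ on the finite-perimeter set $\Omega$ (using that $h\equiv0$ on $\partial^*\Omega$) gives $\la\sigma,\phi\ra=\int_{\partial^*\Omega}\phi\,(\nabla h\cdot\nu)^{\mathrm{in}}\,d\mathscr H^{n-1}\ge0$, because the interior normal trace of the nonnegative $h$, which vanishes on $\partial\Omega$, is $\ge0$ $\mathscr H^{n-1}$-a.e.\ on $\partial^*\Omega$. Hence $\sigma\ge0$, i.e.\ $\mu\le C\mathbf c\,\mathscr H^{n-1}|_{\partial^*\Omega}$; in particular $\mu(\partial\Omega\setminus\partial^*\Omega)=0$ and $\mu=\az\,\mathscr H^{n-1}|_{\partial^*\Omega}$ with $0\le\az\le C\mathbf c$. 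For (2) I would blow up at $y\in\partial^*\Omega$: the rescalings $g_r(z)=G_x(y+rz)/r$ are uniformly Lipschitz, nonnegative, and vanish at $0$, so subsequentially converge to a nonnegative Lipschitz $g_0$ which vanishes outside the half-space $H_y$ (using $\Omega_{y,r}\to H_y$ and $G_x=0$ off $\Omega$); rescaling $\Delta G_x=\az\mathscr H^{n-1}|_{\partial^*\Omega}-\delta_x$ and using $\mathscr H^{n-1}|_{\partial^*\Omega_{y,r}}\rightharpoonup\mathscr H^{n-1}|_{\partial H_y}$ together with Lebesgue differentiation of $\az$ gives $\Delta g_0=\az(y)\mathscr H^{n-1}|_{\partial H_y}$, so $g_0$ is harmonic in $H_y$; Liouville's theorem then forces $g_0(z)=\mathbf a_y(\nu_y\cdot z)_+$ and matching Laplacians forces $\mathbf a_y=\az(y)$, so the limit is subsequence-independent and equals $\az(y)(\nu_y\cdot z)_+$.

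The delicate step is the one producing $\mu=\az\,\mathscr H^{n-1}|_{\partial^*\Omega}$. There are two obstacles: $\mathscr H^{n-1}|_{\partial^*\Omega}$ gives no control whatsoever over $\partial\Omega\setminus\partial^*\Omega$, so a priori the harmonic-measure--type object $\mu$ could charge the part of the topological boundary invisible to the perimeter; and the Gauss--Green identity on a merely finite-perimeter $\Omega$ must be run through the interior normal trace for $L^\infty$ divergence-measure fields rather than the classical divergence theorem. The existence of $u$ is exactly what defeats both: through $h=Cu-G_x\ge0$ it transfers the ``linear vanishing precisely along $\partial^*\Omega$'' from $u$ (Lemma~\ref{basic property u}(4)) to $G_x$, which both bounds $\az$ and kills any boundary mass outside $\partial^*\Omega$.
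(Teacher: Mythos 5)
Your comparison $G_x\le Cu$ via the maximum principle, the Lipschitz bound it yields, and the blow-up argument for part (2) all coincide with the paper's proof. The genuinely different step is how you convert the pointwise bound $G_x\le Cu$ into the statement $\mu:=\Delta G_x+\delta_x=\alpha\,\mathscr H^{n-1}|_{\partial^*\Omega}$. The paper feeds $G_x\le Mu$ into the mean-value identity \eqref{eq:d dr u Delta} to obtain, at every $y\in\R^n\setminus\Omega$, the one-sided density estimate $\liminf_{s\to 0}s^{1-n}\Delta G_x(B_s(y))\le M\mathbf c\lim_{s\to 0}s^{1-n}\mathscr H^{n-1}(\partial^*\Omega\cap B_s(y))$, and then invokes Mattila's density--packing theorem together with the identity $\mathscr P^{n-1}=\mathscr H^{n-1}$ on rectifiable sets. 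You instead set $h=Cu-G_x\ge 0$ and invoke a Gauss--Green formula for the divergence-measure field $h\nabla\phi-\phi\nabla h$ to deduce $\sigma:=C\mathbf c\,\mathscr H^{n-1}|_{\partial^*\Omega}-\mu\ge 0$.

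There is a gap in that step. The Gauss--Green formula for $L^\infty$ divergence-measure fields on a bounded set of finite perimeter $\Omega$ reads $(\operatorname{div}V)(\Omega^{(1)})=-\int_{\partial^*\Omega}V\cdot\nu\,d\mathscr H^{n-1}$, where $\Omega^{(1)}$ denotes the set of density-$1$ points, and with $V=h\nabla\phi-\phi\nabla h$ one finds $\operatorname{div}V=(h\Delta\phi+C\phi\mathbf 1_\Omega)\,dx-\phi\,\sigma$. Tracking the singular part carefully, the formula gives $\langle\sigma,\phi\rangle-(\phi\,\sigma)(\Omega^{(1)}\cap\partial\Omega)=\int_{\partial^*\Omega}\phi\,(\nabla h\cdot\nu)^{\mathrm{in}}\,d\mathscr H^{n-1}$, not $\langle\sigma,\phi\rangle=\int_{\partial^*\Omega}\phi\,(\nabla h\cdot\nu)^{\mathrm{in}}\,d\mathscr H^{n-1}$. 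Since $\partial^*\Omega$ is disjoint from $\Omega^{(1)}$, on $\Omega^{(1)}\cap\partial\Omega$ one has $\sigma=-\mu\le 0$, so the extra term has exactly the wrong sign; a priori $\mu$ can carry mass at density-$1$ boundary points (think of accumulating slits or comb-like boundary pieces, which are invisible to $\partial^*\Omega$), and nothing in your argument rules this out. Hypothesis \eqref{domain boundary1} controls only $\partial^*\Omega$ and gives no information there. The paper's packing-measure route avoids this entirely because the mean-value comparison provides a density bound at \emph{every} point of $\R^n\setminus\Omega$, including density-$1$ boundary points, and Mattila's theorem then bounds the full measure. A secondary issue is that you assert, without proof, that the weak interior normal trace $(\nabla h\cdot\nu)^{\mathrm{in}}$ is $\mathscr H^{n-1}$-a.e.\ nonnegative on $\partial^*\Omega$ merely because $h\ge 0$ vanishes on $\partial\Omega$; this is plausible, but the normal trace in the divergence-measure framework is a weak object and its sign requires justification (for instance via the blow-up of $u$ from Lemma~\ref{basic property u}(4) together with the comparison $G_x\le Mu$), and you should make that explicit.
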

\begin{proof}
Recall that $G_x$ is the monotone limit of $G_{x,k}$.
As  $u>0$ in $\Omega$,
we can choose $\rho=\rho(x,\Omega)>0$ small and $M=M(x,\Omega)>0$ large enough (independent of $k$), so that 
$$G_{x,k}<Mu \quad  \text{ on } \  \partial B_{\rho}(x)$$
for all $k \gg 1$.
Also, as $u>0$ on $\partial\Omega_k$ we have $Mu> G_{x,k}$ on $\partial \Omega_k$, therefore 
$$  G_{x,k}<Mu  \quad \text{ in } \ \Omega_k\setminus B_\rho(x)$$ 
by the maximum principle (recall that $\Delta u=-1<0=\Delta G_{x,k}$ inside $\Omega_k\setminus B_\rho(x)$).
Letting $k\to \infty$, 
this gives
\begin{equation}
    \label{|eq:G u}
 G_{x}\leq Mu  \quad \text{ in } \ \Omega\setminus B_\rho(x).
\end{equation}
Recalling that $u$ vanishes on $\partial\Omega$ and it is Lipschitz, this proves that $G_x$
grows at most linearly at every boundary point. So, by interior regularity estimates, we conclude that $G_x$ is Lipschitz continuous in a neighborhood of $\partial \Omega$

Recalling that $G_{x,k}=0$ outside $\Omega_k$ and that $\Delta G_{x,k}+\delta_x \geq 0$, we see that 
 $G_x = 0$ outside $\Omega$ and that $\Delta G_x+\delta_x \geq 0$ as a distribution. In addition, $\Delta G_x=0$ inside $\Omega\setminus \{x\}$.

Now, given $y \in \R^n\setminus \Omega$ and $r>0$ small, it follows from \eqref{eq:d dr u Delta}, \eqref{|eq:G u}, and \eqref{eq:weak set finite per} that
$$
\int_0^r \frac{\Delta G_x(B_s(y))}{s^{n-1}} \,ds \leq M\int_0^r \frac{\Delta u(B_s(y))}{s^{n-1}}\,ds=M\mathbf{c}\int_0^r \frac{\mathscr H^{n-1}(\partial^*\Omega\cap B_s(y))}{s^{n-1}}\,ds.
$$
Hence, by the rectifiability of the reduced boundary,
$$
\liminf_{s \to 0}\frac{\Delta G_x(B_s(y))}{s^{n-1}} \leq M\mathbf{c}\lim_{s \to 0}\frac{\mathscr H^{n-1}(\partial^*\Omega\cap B_s(y))}{s^{n-1}}=
\left\{
\begin{array}{cl}
M\mathbf{c}\omega_{n-1}& \text{for $\mathscr H^{n-1}$-a.e. $y \in \partial^*\Omega$}\\
0& \text{for $\mathscr H^{n-1}$-a.e. $y \not\in \partial^*\Omega$},
\end{array}
\right.
$$
where $\omega_{n-1}$ denotes the volume of the $(n-1)$-dimensional ball.
By \cite[Theorem 6.11]{M1995}, this implies that
$$
\Delta G_x|_{\R^n\setminus \Omega} \leq C(M,\mathbf{c},n)\mathscr P^{n-1}|_{\partial^*\Omega},
$$
where $\mathscr P^{n-1}$ denotes the $(n-1)$-dimensional packing measure (see \cite[Chapter 5.10]{M1995}). Since $\partial^*\Omega$ is rectifiable it follows that $\mathscr P^{n-1}|_{\partial^*\Omega}=\mathscr H^{n-1}|_{\partial^*\Omega}$.

Thus, combining all together, we proved that
$$
0 \leq \Delta G_x+\delta_x \leq C(M,\mathbf{c},n) \mathscr H^{n-1}|_{\partial^*\Omega}.
$$
By the Radon-Nikodym Theorem, this implies the existence of a measurable function $\az :\partial^*\Omega\to \mathbb [0,C(M,n)]$ such that 
$$\Delta G_x =\az\mathscr H^{n-1}|_{\partial^*\Omega} -\delta_x.$$
This concludes the proof of (1).

The proof of (2) is similar to that of Lemma~\ref{basic property u}(4).  Indeed, given $y\in \partial^*\Omega$, it follows by (1) that the sequence of functions
$$F_r(z)=\frac{G_x(y+rz)}{r}$$
are uniformly Lipschitz and harmonic inside $U_{y,r}=\frac{U-y}{r}$, where $U=\Omega\setminus B_{\rho}(x). $ Also, since $y\in \partial^*\Omega$,
$$U_{y,r}\to H_y \quad \text{ as }\  r\to 0,$$
and, up to a subsequence, $F_r$ converges to a nonnegative Lipschitz function $F_0$ that is harmonic in $H_y$ and vanishes outside $H_y$.
Then Liouville Theorem yields the existence of a constant $\mathbf{a}_y\ge 0$ so that 
$$F_0=\mathbf{a}_y(\nu_y\cdot z)_+.$$ 
On the other hand, using again that $y\in \partial^*\Omega$, it follows that  $\mathscr{H}^{n-1}|_{\partial^* \Omega_{y,r}} \rightharpoonup \mathscr{H}^{n-1}|_{\partial H_y}$.
Hence $\Delta F_0=\az (y)\mathscr{H}^{n-1}|_{\partial H_y}$. Combining these two consequences, we conclude that $\az (y)=\mathbf{a}_y$.
This shows that $F_{r_k}\to \mathbf{a}_y(\nu_y\cdot z)_+$ for any converging subsequence $r_k$, so the entire sequence $F_r$ converges to $\mathbf{a}_y(\nu_y\cdot z)_+$, as desired.
\end{proof}

We can now prove a maximum principle for $|\nabla u|$.
\begin{prop}\label{max nabla u}
    Let $\Omega$ be an open bounded set satisfying the assumptions in Theorem~\ref{main thm}, and let $u$ solve \eqref{eq:weak set finite per}. Then
    $$\sup_{\Omega} |\nabla u|\le \mathbf{c}. $$
\end{prop}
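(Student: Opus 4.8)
\textbf{Proof plan for Proposition~\ref{max nabla u}.}

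The plan is to use the Green function $G_x$ and the structure of $\Delta G_x$ from Lemma~\ref{greens function} as an auxiliary tool to run a comparison argument on $|\nabla u|$, exploiting that a solution to Serrin's problem actually exists in $\Omega$. Fix an interior point $x_0 \in \Omega$; inside $\mathring\Omega$ the function $u$ is smooth and $\Delta u = -1$, so a direct computation gives $\Delta(|\nabla u|^2) = 2|D^2u|^2 \geq 0$, i.e.\ $|\nabla u|^2$ is subharmonic in $\Omega$. The difficulty (noted in points (i)--(ii) before the statement) is that we cannot simply apply the maximum principle up to the boundary, because $|\nabla u|$ need not extend continuously to $\partial\Omega$, and in particular we have no pointwise control at points of $\partial\Omega \setminus \partial^*\Omega$. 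The key idea is that the Green function, being comparable to $u$ near $\partial\Omega$ by \eqref{|eq:G u}, provides exactly the barrier needed to control $|\nabla u|$ near the boundary from the information available at $\partial^*\Omega$.

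The main steps I would carry out are: (1) First, show that $|\nabla u| \le \mathbf{c}$ is equivalent to the statement that $v := \mathbf{c}^2 - |\nabla u|^2 \ge 0$ in $\Omega$; since $v$ is superharmonic in $\Omega$ (as $|\nabla u|^2$ is subharmonic) and $v$ is continuous in $\mathring\Omega$, by the minimum principle it suffices to rule out a negative interior infimum, hence to show $\liminf v \ge 0$ as one approaches $\partial\Omega$. (2) At points $y \in \partial^*\Omega$, Lemma~\ref{basic property u}(4) gives, after differentiating the blow-up, that $|\nabla u|(y + rz) \to \mathbf{c}$, so $v \to 0$ along these approach directions — this handles $\mathscr{H}^{n-1}$-a.e.\ boundary point. (3) For the remaining (small, but possibly nonempty) set $\partial\Omega \setminus \partial^*\Omega$, use the Green function: exploit \eqref{|eq:G u} (so $G_x \le Mu$) together with the fact that $\Delta G_x = \az\,\mathscr{H}^{n-1}|_{\partial^*\Omega} - \delta_x$ has boundary part \emph{supported on $\partial^*\Omega$} to derive, via the coarea/divergence structure, an integral identity relating $\int_{\partial^*\Omega}$ to bulk quantities, thereby forcing $\az$ to be bounded and pinning down $|\nabla u|$ globally; concretely, I expect to combine the volume identity from Lemma~\ref{lem:volume} with a Pohozaev-type or Rellich-type identity obtained by testing the equation against $\nabla u \cdot x$, which will yield $\int_{\partial^*\Omega} |\nabla u|^2 (\nu \cdot x)\,d\mathscr{H}^{n-1}$ in closed form, and compare it against the subharmonicity of $|\nabla u|^2$ to conclude that the sup is attained in the limit at $\partial^*\Omega$, where it equals $\mathbf{c}$.

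The hard part will be step (3): controlling the behavior of $|\nabla u|$ near the ``bad'' part of the boundary $\partial\Omega \setminus \partial^*\Omega$, where we have no blow-up information at all. The resolution I anticipate is that such points are harmless precisely because $|\nabla u|^2$ is subharmonic and $u$ (hence, by \eqref{|eq:G u}, $G_x$) vanishes there at least linearly: one shows that if $|\nabla u|$ exceeded $\mathbf{c}$ somewhere in $\Omega$, then by subharmonicity $\sup_\Omega |\nabla u|^2$ would have to be ``charged'' at the boundary, but an $L^2$-type estimate on $\nabla u$ near $\partial\Omega$ (coming from the Lipschitz bound and the smallness of the neighborhoods $\mathcal{N}_\ez$) rules out concentration on the lower-dimensional bad set. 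A clean way to package this is: let $w$ solve $\Delta w = 0$ in $\Omega$ with $w = |\nabla u|^2$ on $\partial^*\Omega$ (defined via the Green function / harmonic measure, using that Lemma~\ref{greens function}(1) shows harmonic measure is absolutely continuous w.r.t.\ $\mathscr{H}^{n-1}|_{\partial^*\Omega}$), so $w \le \mathbf{c}^2$ by step (2); then $|\nabla u|^2 \le w$ in $\Omega$ by the comparison principle (the bad boundary set has harmonic measure zero), giving $|\nabla u| \le \mathbf{c}$.
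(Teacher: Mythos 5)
Your opening instincts are correct and match the paper's: the Green function $G_x$ is the right tool, the absolute continuity of harmonic measure w.r.t.\ $\mathscr{H}^{n-1}|_{\partial^*\Omega}$ (Lemma~\ref{greens function}(1)) is the decisive structural fact, and you rightly identify $\partial\Omega\setminus\partial^*\Omega$ as the danger zone. But the concrete mechanism you propose in step (3) has a genuine gap. To run the comparison $|\nabla u|^2 \le w$ (with $w$ the harmonic extension of $|\nabla u|^2|_{\partial^*\Omega}$) via a Perron-type maximum principle, you need $\limsup_{\Omega\ni z\to y}|\nabla u|^2(z)\le \mathbf{c}^2$ for $\omega^x$-a.e.\ $y$, i.e.\ for $\mathscr{H}^{n-1}$-a.e.\ $y\in\partial^*\Omega$. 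However, Lemma~\ref{basic property u}(4) is only a blow-up statement: the rescalings $u_{y,r}(z)=u(y+rz)/r$ converge locally uniformly on compact subsets of the open half-space $H_y$, hence $\nabla u\to\mathbf{c}\nu_y$ only along \emph{non-tangential} approach. Tangential sequences $z_k\to y$ (with $(z_k-y)/|z_k-y|$ approaching $\partial H_y$) escape the compact sets where the blow-up convergence holds, so the full $\limsup$ is not controlled and your comparison inequality does not follow. In particular, you cannot deduce ``$w\le \mathbf{c}^2$ and $|\nabla u|^2\le w$'' from the lemmas as stated; this would essentially presuppose continuity of $|\nabla u|$ up to $\partial^*\Omega$, which is part of what one is trying to prove. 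Your Pohozaev aside does not repair this, since it only reproduces the integral identity of Lemma~\ref{lem:volume} and gives no pointwise bound on $|\nabla u|$.

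The paper sidesteps the maximum principle for $|\nabla u|^2$ entirely. It tests \eqref{eq:weak set finite per} against the finite difference of the Green function, $\phi_\ez(y)=\tfrac{G_x(y+\ez e)-G_x(y-\ez e)}{2\ez}$, and passes to the limit using only the pointwise blow-ups of $u$ and $G_x$ at $\partial^*\Omega$ (Lemma~\ref{basic property u}(4) and Lemma~\ref{greens function}(2)), together with dominated convergence. This yields the exact representation $\partial_e u(x)=\mathbf{c}\int_{\partial^*\Omega}\az\,\nu_y\cdot e\,d\mathscr{H}^{n-1}-\int_\Omega\partial_e G_x\,dy$, and then two integral identities --- $\int_\Omega\partial_e G_x=0$ (divergence theorem) and $\int_{\partial^*\Omega}\az\,d\mathscr{H}^{n-1}=1$ (from $\int\Delta G_x=0$) --- give $|\partial_e u(x)|\le\mathbf{c}$ directly. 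No tangential control, no comparison principle, and no information on $\partial\Omega\setminus\partial^*\Omega$ are required, since the measure $\Delta G_x+\delta_x$ simply does not see that set. You should replace the harmonic-majoration step with this test-function argument.
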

\begin{proof}
For $\ez>0$ and $e \in \mathbb S^{n-1}$, let
$$\phi_\ez(y)=\frac{G_x(y+\ez e)-G_x(y-\ez e)}{2\ez},$$
and note that $\phi_\ez$ is uniformly bounded and Lipschitz continuous near $\partial\Omega$.
Thus, testing \eqref{eq:weak set finite per} against $\phi_\ez$ we obtain 
\begin{align}
\mathbf{c} \int_{\partial^*\Omega}  \phi_\ez \, d\mathscr H^{n-1} -\int_{\Omega} \phi_\ez\, dy =\int_{\Omega} u \Delta  \phi_\ez \, dy. \label{test phi}  
 \end{align}
Recall that, by Lemma~\ref{greens function}(2), we have  
$$
    \Delta \phi_\ez(y) =   \frac{  \az \, \mathscr H^{n-1}|_{\partial^*\Omega-\ez e} - \az  \,  \mathscr H^{n-1}|_{\partial^*\Omega+\ez e}}{2\ez} 
    -\frac{\delta_{x- \ez e} - \delta_{x+\ez e}}{2\ez}.    
$$
Therefore, by a change of variable and Lemma~\ref{basic property u}(4) we get
\begin{align*}
     \int_{\Omega} u\Delta \phi_\ez \, dx
   &=     \frac{1}{2\ez}\left[ \int_{\partial^*\Omega-\ez e} \az(y+\ez e) u(y)\, d\mathscr H^{n-1} -  \int_{\partial^*\Omega+\ez e} \az(y-\ez e) u(y)\, d\mathscr H^{n-1} \right]  \\
   &\quad \quad -\left\langle u,\, \frac{\delta_{x- \ez e} - \delta_{x+\ez e}}{2\ez}\right\rangle\\
   &=  -\int_{\partial^*\Omega} \frac{u(y+\ez e)-u(y-\ez e)}{2\ez} \az(y)\, d\mathscr H^{n-1} + \left\langle \frac {u(y+ \ez e) - u(y-\ez e)}{2\ez} ,\, \delta_x \right\rangle \\
   &= -   \frac{ \mathbf{c}}{2}  \int_{\partial^*\Omega}  \az \nu_y \cdot e \, d\mathscr H^{n-1}  + \partial_e u(x) + o(1).
\end{align*}
Concerning the left-hand side of \eqref{test phi}, we note that
$\phi_\ez\to DG\cdot e$ inside $\Omega$. Thus, by dominated convergence we get
$$\int_{\Omega} \phi_\ez\, dy=\int_{\Omega} \partial_e G_x\, dy +o(1).$$
In addition, Lemma~\ref{greens function} tells that, 
 for $\mathscr H^{n-1}$-almost every $y\in \partial^*\Omega$, we have
$$
\phi_\ez(y)=\frac{G_x(y+\ez e)-G_x(y-\ez e)}{2\ez}
\to \frac{\mathbf{a}_y}2 \big((\nu_y\cdot e)_+ - (\nu_y\cdot e)_-\big)=\frac{\az(y)}{2} \nu_y\cdot e.
$$
Thus, the left-hand side of \eqref{test phi} is equal to
$$ \frac{\mathbf{c}}{2} \int_{\partial^*\Omega} \az\nu_y\cdot e\, d\mathscr H^{n-1}- \int_{\Omega} \partial_e G_x\, dy +o(1). $$
As a result,  by letting  $\ez\to 0$ in \eqref{test phi} we eventually obtain
\begin{equation}
\label{eq:pe u}
    \partial_e u(x) =  {\mathbf{c} }  \int_{\partial^*\Omega} \az\nu_y \cdot e \, d\mathscr H^{n-1} -  \int_{\Omega} \partial_e G_x\, dy.
\end{equation}
Note now that, by the divergence theorem in sets for finite perimeter (recall that $\nu_y$ is the inner unit normal), 
\begin{equation}
\label{eq:pe G}
\int_{\Omega} \partial_e G_x\, dy = -\int_{\partial^* \Omega} G_x \nu_y\cdot e \, d\mathscr H^{n-1}= 0.
\end{equation}
Also, thanks to Lemma~\ref{greens function},
\begin{equation}
\label{eq:pe G 2}
0 = \int_{\mathbb R^n} \Delta G_x\, dy
= - 1 + \int_{\partial^*\Omega} \az \, d\mathscr H^{n-1}.
\end{equation}
Combining \eqref{eq:pe u}, \eqref{eq:pe G}, and \eqref{eq:pe G 2}, we get
$$|\partial_e  u(x)|\le \mathbf{c}\int_{\partial^*\Omega} \az \, d\mathscr H^{n-1}=\mathbf{c}.$$
Since $x$ and $e$ are arbitrary, the result follows.
\end{proof}

\begin{proof}[Proof of Theorem~\ref{main thm}]
Thanks to the previous results, we can basically repeat Weinberger's argument \cite{W1971} with minor modifications.

More precisely, recalling that we can assume $\Omega$ to be open (see Lemma~\ref{basic property u}(3)), define 
$$P=|\nabla u|^2 + \frac 2 n u \qquad \text{inside }\Omega.$$
Since $\Delta u=-1$ inside $\Omega$, it follows that
\begin{equation}
\label{eq:Delta P}
\Delta P = 2 |D^2 u|^2 - \frac 2 n \Delta u=2\biggl|{D^2u}+\frac{1}{n}\textrm{Id}\biggr|^2\ge 0. \end{equation}
In particular, as $u$ is smooth in $\{u> \eta\}$, Proposition~\ref{max nabla u} and the weak maximum principle imply that
$$
\max_{\{u\geq \eta\}}P=\max_{\partial\{u\geq \eta\}}P
\leq \mathbf c^2+\frac{2}n\eta \qquad \forall\,\eta>0,
$$
so letting $\eta \to 0$ we deduce that 
\begin{equation}
\label{eq:max P}
P\le \mathbf  c^2\qquad \text{in $\Omega$.}
\end{equation}
Using again that $\Delta u=-1$ inside $\Omega$, thanks to \eqref{volume} and \eqref{eq:max P}  we have
$$\mathbf{c}^2 |\Omega|=\frac{n+2}{n} \int_{\Omega} u  \, dx
=\int_{\Omega} u\biggl(-\Delta u +\frac{2}{n} \biggr) \, dx=\int_{\Omega} \left( |\nabla u|^2 + \frac 2 n u \right) \, dx =\int_{\Omega} P \, dx\leq  \mathbf{c}^2 |\Omega|.$$
The equation above implies that $P= \mathbf c^2$ inside $\Omega$. In particular $\Delta P=0$, so it follows from 
\eqref{eq:Delta P} that 
$$
{D^2u}= -\frac{1}{n}\textrm{Id}\qquad \text{in $\Omega$.}
$$
As $\Omega$ is indecomposable, this implies that $\Omega$ is a ball and that, up to a translation, $u$ is given by \eqref{eq:sol u}.
\end{proof}

\section{Extension to slit domains}
\label{sect:extension}

In the previous section we proved the validity of Serrin's Theorem in the setting of sets of finite perimeter.
However, as already mentioned in Remark~\ref{rem:slit}, this formalism does not allow one to treat sets with slit discontinuities, say a slit ball. In fact,  from a measure-theoretic point of view, a slit ball is equivalent to a ball.

To extend our result to slit domains, we need to find a suitable reformulation of \eqref{serrin}. To this end, we note that in the proof of Theorem~\ref{main thm}, the Neumann condition on $u$ was crucially used to prove the blow-up result in Lemma~\ref{basic property u}(4).
In the case of slit domains, the assumption that $\Delta u=2\mathbf{c}\mathscr{H}^{n-1}$ on the slit does not ensure that on both sides of the slit the function $u$ behaves like a linear function with slope $\mathbf c$ (the slopes on the two sides may be different). 
As we will see, to prove Lemma~\ref{lem:volume} in the case of a slit domain, the equality of the slopes from the two sides of the slit is required. In addition, we need to guarantee that $u$ vanishes on the slit in a suitable weak sense.
Both conditions are included in the assumption \eqref{eq:blow up1} below.

The following generalization of the Theorem~\ref{main thm} holds.

\begin{thm}\label{main thm 2}
Let $\Omega\subset \mathbb R^n$ be a indecomposable set  of finite perimeter,
let $u \in W^{1,2}(\R^n)$ satisfy
\begin{equation}
\label{eq:weak Lip}
u=0\quad \text{a.e. in }\R^n\setminus \Omega,\qquad \Delta u=\mathbf{c}\mathscr{H}^{n-1}|_{\partial^*\Omega}
+2\mathbf{c}\mathscr{H}^{n-1}|_{\Sigma}- \mathbf{1}_{\Omega}\,dx
\end{equation}
for some $(n-1)$-rectifiable set $\Sigma\subset \mathring\Omega$, and assume that 
\begin{equation}\label{domain boundary2}
\mathscr H^{n-1}\big(B_r(x)\cap (\partial^* \Omega\cup \Sigma)\big)\le A r^{n-1} \quad \text{ for $\mathscr H^{n-1}$-a.e. } x\in \partial^* \Omega\cup \Sigma \text{ and $r \in (0,1)$.} 
\end{equation}
Also, suppose that
at $\mathscr H^{n-1}$-a.e. $x \in \Sigma$ it holds
\begin{equation}
\label{eq:blow up1}
\frac{u(x+rz)}{r} \to \mathbf{c}|\nu_x\cdot z| \quad \text{ as }r\to 0,
\end{equation}
where $\nu_x$ denotes the measure-theoretic unit normal at $x$. 
Then, up to a translation, 
 $\Omega$ is a ball of radius $R=R(n,\mathbf{c})>0$ and $u$ is given by \eqref{eq:sol u}. 
\end{thm}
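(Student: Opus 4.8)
The strategy is to reduce Theorem~\ref{main thm 2} to the already-established Theorem~\ref{main thm} by carefully re-running the preliminary lemmas in the presence of the slit $\Sigma$, and then repeating Weinberger's argument verbatim. The key observation is that, away from $\Sigma$, nothing changes: the analysis at points of $\partial^*\Omega$ is identical to that in Section~\ref{sec:prelim}, so the only work is to check that the slit does not spoil (a) the Lipschitz regularity and basic structure of $u$, (b) the volume identity \eqref{volume}, and (c) the gradient bound $|\nabla u|\le\mathbf c$.

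\textbf{Step 1: analogue of Lemma~\ref{basic property u}.} First I would show $u$ is globally Lipschitz with constant depending only on $n,\mathbf c,A$. The argument via \eqref{eq:d dr u Delta} still applies: for $x\in\partial^*\Omega\cup\Sigma$, the measure $\Delta u+\mathbf 1_\Omega$ is bounded by $2\mathbf c\,\mathscr H^{n-1}|_{\partial^*\Omega\cup\Sigma}$, so \eqref{domain boundary2} gives $\Delta u(B_r(x))\le 2A\mathbf c\, r^{n-1}$, hence the averages of $u$ over $\partial B_r(x)$ and $B_r(x)$ grow at most linearly. Combined with interior regularity inside $\{u>0\}$ and, crucially, the assumption \eqref{eq:blow up1} which forces $u$ to vanish (at linear rate) along $\Sigma$, one concludes global Lipschitz continuity. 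Parts (2)--(3) require a small adjustment: on $\Sigma\subset\mathring\Omega$ we no longer have $\Delta u=-1$ pointwise, but since $u\ge 0$ (by the maximum principle applied away from the singular set, using that $\Delta u=-1<0$ on $\mathring\Omega\setminus\Sigma$ and $u=0$ outside $\Omega$) and $\Sigma$ has measure zero, we still get $\{u>0\}=\mathring\Omega$ up to the slit, $u\in C^\infty(\mathring\Omega\setminus\Sigma)$, and $\Omega=\mathring\Omega$ a.e. Part (4) at points of $\partial^*\Omega$ is unchanged; the new blow-up information at points of $\Sigma$ is precisely the hypothesis \eqref{eq:blow up1}, which replaces the one-sided half-space blow-up by the symmetric two-sided one $\mathbf c|\nu_x\cdot z|$.

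\textbf{Step 2: analogue of Lemma~\ref{lem:volume}.} I would repeat the computation with $\varphi_\ez$ and $\psi_\ez$ as in \eqref{eq:varphi eps}--\eqref{eq:psi eps}, but now $\Delta u$ contains the extra term $2\mathbf c\,\mathscr H^{n-1}|_\Sigma$, which transforms under the dilations $x\mapsto(1\pm\ez)x$ exactly like the $\mathscr H^{n-1}|_{\partial^*\Omega}$ term (the factor $(1\pm\ez)^2$ from the Jacobian and the same cancellation $\frac{(1+\ez)^2-(1-\ez)^2}{2\ez}-2=0$). So testing \eqref{eq:weak Lip} against $\varphi_\ez$ produces, in addition to the boundary term $\mathbf c\int_{\partial^*\Omega}\psi_\ez\,d\mathscr H^{n-1}$, a slit term $2\mathbf c\int_\Sigma\psi_\ez\,d\mathscr H^{n-1}$. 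Here is where \eqref{eq:blow up1} is essential: it gives $\psi_\ez(x)\to -\mathbf c\big((\nu_x\cdot x)_+-(\nu_x\cdot x)_-\big)$ at points of $\partial^*\Omega$ as before, while at points of $\Sigma$ the symmetric blow-up yields $\psi_\ez(x)\to -\mathbf c\big((\nu_x\cdot x)_- - (\nu_x\cdot x)_-\big)+\ldots$; in fact the relevant limit is that the slit contribution $2\mathbf c\int_\Sigma\psi_\ez\,d\mathscr H^{n-1}\to 0$, because the two sides of the slit contribute with opposite orientation and the equality of slopes makes them cancel. (This is exactly the point flagged in the paragraph preceding the theorem.) One is then left with
$$-\mathbf c^2\int_{\partial^*\Omega}\nu_x\cdot x\, d\mathscr H^{n-1}=(n+2)\int_\Omega u\,dx,$$
and the divergence theorem for sets of finite perimeter (the slit, having measure zero and being interior, contributes nothing to $\int_\Omega\mathrm{div}(x)\,dx$ since it is an integral over $\Omega$ of an $L^1$ function) gives again $(n+2)\int_\Omega u\,dx=\mathbf c^2 n|\Omega|$.

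\textbf{Step 3: Green function and gradient bound; conclusion.} The construction of $G_x$ and Lemma~\ref{greens function} go through with $\partial^*\Omega$ replaced by $\partial^*\Omega\cup\Sigma$: the comparison $G_x\le Mu$ near the singular set still holds (now $u$ vanishes at linear rate on $\Sigma$ too, by \eqref{eq:blow up1}), so $G_x$ grows at most linearly there, and the packing-measure argument of \cite[Theorem 6.11]{M1995} yields $0\le\Delta G_x+\delta_x\le C\,\mathscr H^{n-1}|_{\partial^*\Omega\cup\Sigma}$, hence a density $\az$ on $\partial^*\Omega\cup\Sigma$ and a blow-up statement at $\mathscr H^{n-1}$-a.e.\ point of it. The proof of Proposition~\ref{max nabla u} then runs identically: testing $\Delta u$ against $\phi_\ez=\frac{G_x(\cdot+\ez e)-G_x(\cdot-\ez e)}{2\ez}$, the $\Sigma$-contributions on both sides combine — using \eqref{eq:blow up1} for $u$ and the $\Sigma$-analogue of Lemma~\ref{greens function}(2) for $G_x$ — to give the same identity $\partial_e u(x)=\mathbf c\int_{\partial^*\Omega\cup\Sigma}\az\,\nu_y\cdot e\,d\mathscr H^{n-1}-\int_\Omega\partial_e G_x\,dy$, together with $\int_\Omega\partial_e G_x\,dy=0$ and $\int_{\partial^*\Omega\cup\Sigma}\az\,d\mathscr H^{n-1}=1$, whence $|\nabla u|\le\mathbf c$ in $\Omega$. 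Finally, with $P=|\nabla u|^2+\frac2n u$, the inequality $\Delta P\ge 0$ holds inside $\mathring\Omega\setminus\Sigma$, and since $\Sigma$ is $(n-1)$-rectifiable (hence $2$-capacity-negligible, or alternatively since $u$ is Lipschitz and $P$ bounded so $\Sigma$ is removable for the subharmonicity) the maximum principle gives $P\le\mathbf c^2$ on all of $\Omega$; combined with the volume identity this forces $P\equiv\mathbf c^2$, hence $D^2u=-\frac1n\mathrm{Id}$ on $\mathring\Omega\setminus\Sigma$, and indecomposability of $\Omega$ forces $\Omega$ to be a ball with $u$ given by \eqref{eq:sol u}.

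\textbf{Main obstacle.} I expect the delicate point to be Step~2, specifically verifying that the slit term $2\mathbf c\int_\Sigma\psi_\ez\,d\mathscr H^{n-1}$ genuinely vanishes in the limit: this is where the precise symmetric form of \eqref{eq:blow up1} (equality of the two one-sided slopes) is used in an essential way, and where a naive guess of the blow-up (e.g.\ allowing different slopes $\mathbf a_+\ne\mathbf a_-$) would break the identity and hence the whole proof. A secondary technical point is the removability of $\Sigma$ in Step~3 for the subharmonic function $P$ and for the various integration-by-parts identities; this should follow from $\mathscr H^{n-1}(\Sigma)<\infty$ together with the Lipschitz bound on $u$ (so that all the relevant quantities are bounded and $\Sigma$ is a null set for the ambient Lebesgue measure), but it must be checked that no boundary term concentrates on $\Sigma$.
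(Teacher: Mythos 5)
Your overall strategy — re-running Lemmas~\ref{basic property u}, \ref{lem:volume}, \ref{greens function} and Proposition~\ref{max nabla u} in the presence of the slit, then finishing with Weinberger's argument — is exactly the paper's route, and Steps~1 and~2 match the paper's proof in spirit and in detail. In particular your identification of the key cancellation in Step~2 (the slit term $2\mathbf c\int_\Sigma\psi_\ez\,d\mathscr H^{n-1}\to 0$, precisely because \eqref{eq:blow up1} forces equal slopes $\mathbf c$ on both sides of $\Sigma$) is correct and is what the paper does.

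However, Step~3 contains a genuine gap. You describe the Green function as having ``a density $\az$ on $\partial^*\Omega\cup\Sigma$'' and claim that the proof of Proposition~\ref{max nabla u} ``runs identically'' to produce $\partial_e u(x)=\mathbf c\int_{\partial^*\Omega\cup\Sigma}\az\,\nu_y\cdot e\,d\mathscr H^{n-1}$. This is not correct. At an interior slit point $y\in\Sigma$ the Green function $G_x$ is positive on \emph{both} sides of $\Sigma$, so its blow-up is not a one-sided half-plane profile $\mathbf a_y(\nu_y\cdot z)_+$; it is
\[
\frac{G_x(y+rz)}{r}\to \mathbf a_y^+(\nu_y\cdot z)_+ + \mathbf a_y^-(\nu_y\cdot z)_-,
\]
with two a priori unrelated nonnegative one-sided slopes satisfying $\mathbf a_y^++\mathbf a_y^-=\beta(y)$, where $\beta$ is the density of $\Delta G_x$ on $\Sigma$. (This is exactly the phenomenon the paper warns about in the paragraph before the theorem: the hypothesis \eqref{eq:blow up1} forces symmetric slopes for $u$, but nothing forces them for $G_x$.) Consequently, when you test against $\phi_\ez$, the slit contribution to the left-hand side produces $\mathbf c\int_\Sigma(\mathbf a_y^+-\mathbf a_y^-)\,\nu_y\cdot e\,d\mathscr H^{n-1}$, not a term with the density $\beta$; the correct identity is
\[
\partial_e u(x)=\mathbf c\int_{\partial^*\Omega}\az\,\nu_y\cdot e\,d\mathscr H^{n-1}+\mathbf c\int_\Sigma(\mathbf a_y^+-\mathbf a_y^-)\,\nu_y\cdot e\,d\mathscr H^{n-1},
\]
and the bound $|\partial_e u(x)|\le\mathbf c$ then requires the separate observation that $|\mathbf a_y^+-\mathbf a_y^-|\le\mathbf a_y^++\mathbf a_y^-=\beta(y)$ combined with $\int_{\partial^*\Omega}\az\,d\mathscr H^{n-1}+\int_\Sigma\beta\,d\mathscr H^{n-1}=1$. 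Your formulation skips this triangle-inequality step entirely, and the ``naive guess'' you flag in your closing paragraph (different one-sided slopes) is not a pitfall to be avoided but a genuine feature of $G_x$ that must be accommodated. As a minor point, your worry about removability of $\Sigma$ for the subharmonic function $P$ is unnecessary: the weak maximum principle is applied on the level sets $\{u\ge\eta\}\subset\mathring\Omega\setminus\overline\Sigma$ exactly as in Theorem~\ref{main thm}, so $P$ never needs to be extended across the slit.
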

To prove this theorem, we first note that the following generalization of Lemma~\ref{basic property u} holds with essentially the same proof.

\begin{lem}\label{basic property u2}
    Let $\Omega$ and $u$ satisfy the assumption of Theorem~\ref{main thm 2}.
Then:
\begin{enumerate}
    \item[(1)] $u$ is $L$-Lipschitz continuous, with $L=L(n,\mathbf{c},A)$.
    \item[(2)] $u$ is nonnegative,  $\{u>0\}=\mathring\Omega\setminus\overline\Sigma$, and $u \in C^\infty(\mathring\Omega\setminus\overline\Sigma)$.
     \item[(3)] $\Omega=\mathring\Omega\setminus\overline\Sigma$ up to a set of measure zero. 
\item[(4)] At every point $x\in \partial^*\Omega$ it holds
$$
\frac{u(x+rz)}{r} \to \mathbf{c}\big(\nu_x\cdot z \big)_+ \quad \text{ as }r\to 0,
$$
where $\nu_x$ denotes the measure-theoretic inner unit normal at $x$. 
\end{enumerate}
\end{lem}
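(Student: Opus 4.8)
\emph{Overall strategy.} The plan is to run the proof of Lemma~\ref{basic property u} almost verbatim, tracking three changes caused by the slit $\Sigma$. First, the singular part of $\Delta u$ is now carried by $\partial^*\Omega\cup\Sigma$, with the larger coefficient $2\mathbf{c}$ on $\Sigma$. Since $\Sigma\subset\mathring\Omega$ while $\partial^*\Omega\subset\partial\Omega$, these two sets are disjoint, so for $x\in\partial^*\Omega\cup\Sigma$ and $r\in(0,1)$ the density bound \eqref{domain boundary2} gives
$$
\Delta u(B_r(x))\le(\Delta u+\mathbf{1}_\Omega)(B_r(x))=\mathbf{c}\,\mathscr H^{n-1}(\partial^*\Omega\cap B_r(x))+2\mathbf{c}\,\mathscr H^{n-1}(\Sigma\cap B_r(x))\le 2A\mathbf{c}\,r^{n-1}.
$$
Plugging this into \eqref{eq:d dr u Delta} together with $u=0$ a.e.\ outside $\Omega$, exactly as in Lemma~\ref{basic property u}(1), yields $\bint_{B_r(x)}u\le C(n,A,\mathbf{c})\,r$ for every $x\in\partial^*\Omega\cup\Sigma$, hence for every $x\in\partial\Omega\cup\overline\Sigma$ by continuity of the average. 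Second, the open set on which $\Delta u=-1$ is now $\mathring\Omega\setminus\overline\Sigma$, and interior estimates there give a uniform interior Lipschitz bound. Third, $u$ must vanish not only a.e.\ outside $\Omega$ but also on $\overline\Sigma$, and this is exactly where hypothesis \eqref{eq:blow up1} enters: it forces $u(x+rz)=\mathbf{c}\,r\,|\nu_x\cdot z|+o(r)$ at $\mathscr H^{n-1}$-a.e.\ $x\in\Sigma$, so once continuity is established $u=0$ on $\overline\Sigma$. Combining these as in \cite[Lemma 3.5]{V2023} gives that $u$ is globally $L$-Lipschitz with $L=L(n,\mathbf{c},A)$, proving (1).

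\emph{Items (2) and (3).} On the open set $\mathring\Omega\setminus\overline\Sigma$ one has $\Delta u=-1<0$, so $u$ is superharmonic there; since $u$ vanishes continuously on $\partial\Omega\cup\overline\Sigma$ and a.e.\ on $\R^n\setminus\Omega$, applying the minimum principle on each connected component of $\mathring\Omega\setminus\overline\Sigma$ (whose boundary lies in $\partial\Omega\cup\overline\Sigma$, where $u$ vanishes) forces $u\ge 0$, while the strong maximum principle forces $u>0$ on each component ($u$ cannot vanish at an interior point without being identically $0$, incompatible with $\Delta u=-1$). Hence $\{u>0\}=\mathring\Omega\setminus\overline\Sigma$ and $u\in C^\infty(\mathring\Omega\setminus\overline\Sigma)$ by elliptic regularity, which is (2). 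For (3), $u\ge 0$ forces $\Delta u\ge 0$ on $\{u=0\}=\R^n\setminus(\mathring\Omega\setminus\overline\Sigma)$, while $\Delta u=-1$ on $\{u>0\}$; thus $\Delta u=\mu_+-\mathbf{1}_{\mathring\Omega\setminus\overline\Sigma}\,dx$ for some nonnegative measure $\mu_+$, and comparing with \eqref{eq:weak Lip} gives $\mathbf{1}_\Omega\,dx=\mathbf{1}_{\mathring\Omega\setminus\overline\Sigma}\,dx$, i.e.\ $\Omega=\mathring\Omega\setminus\overline\Sigma$ up to a null set.

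\emph{Item (4) and the main obstacle.} I would mimic Lemma~\ref{basic property u}(4): for $x\in\partial^*\Omega$ the rescalings $v_r(z)=u(x+rz)/r$ are uniformly Lipschitz, $\Omega_{x,r}=(\Omega-x)/r\to H_x$ (a half-space), and along subsequences $v_r\to v_0$, a nonnegative Lipschitz function harmonic in $H_x$ and vanishing outside it, whence $v_0=\mathbf{a}(\nu_x\cdot z)_+$ by Liouville, with $\mathbf{a}=\mathbf{c}$ after matching the boundary measures. I expect the only genuine new difficulty to be controlling the slit in this blow-up: a priori $(\Sigma-x)/r$ could accumulate and contribute an extra nonnegative term to $\Delta v_0$ inside $H_x$, destroying the identification of $v_0$ as a linear profile. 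This is excluded because $\Sigma\subset\mathring\Omega$ is disjoint from $\partial^*\Omega$ and $\mathscr H^{n-1}|_\Sigma$ is a locally finite Radon measure, so $\Sigma$ has vanishing $(n-1)$-density at $\mathscr H^{n-1}$-a.e.\ point of $\partial^*\Omega$; at such a point $\mathscr H^{n-1}(\Sigma\cap B_r(x))/r^{n-1}\to 0$, hence $\mathscr H^{n-1}|_{(\Sigma-x)/r}\rightharpoonup 0$, and the argument closes exactly as in Lemma~\ref{basic property u}(4). This establishes (4) at $\mathscr H^{n-1}$-a.e.\ $x\in\partial^*\Omega$, which suffices for the applications (the statement being used only under integrals over $\partial^*\Omega$, as in the slit analogue of Lemma~\ref{lem:volume}, where one likewise uses that $\partial^*\Omega$ does not accumulate at $\mathscr H^{n-1}$-a.e.\ point of $\Sigma$, by \eqref{domain boundary2} and \eqref{eq:blow up1}).
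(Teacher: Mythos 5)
Your proof is correct and fills in exactly what needs to be said; the paper gives no argument here beyond the remark that the lemma ``holds with essentially the same proof'' as Lemma~\ref{basic property u}, so your job was to locate and address the genuinely new points, which you do. The most valuable observation is the one you make for item~(4): at a general $x\in\partial^*\Omega$ the rescaled slit measure $\mathscr H^{n-1}|_{(\Sigma-x)/r}$ need not vanish in the limit (a slit can a priori accumulate at a reduced-boundary point with positive $(n-1)$-density once $A>\omega_{n-1}$), but since $\Sigma\cap\partial^*\Omega=\emptyset$, the finite measures $\mathscr H^{n-1}|_\Sigma$ and $\mathscr H^{n-1}|_{\partial^*\Omega}$ are mutually singular, and the standard upper-density comparison (e.g.\ \cite[Theorem 6.9]{M1995}) then gives $\mathscr H^{n-1}(\Sigma\cap B_r(x))/r^{n-1}\to0$ for $\mathscr H^{n-1}$-a.e.\ $x\in\partial^*\Omega$. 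This yields (4) at $\mathscr H^{n-1}$-a.e.\ $x\in\partial^*\Omega$ rather than literally ``every'' as the statement reads, and you correctly note that this is all that is actually used: in Lemma~\ref{lem:volume 2}, Lemma~\ref{greens function2} and Proposition~\ref{max Du 2} item~(4) only enters under integrals over $\partial^*\Omega$. One small rephrasing would help in (1): what \eqref{eq:blow up1} really supplies there is the initial value $\lim_{s\to0}\bint_{\partial B_s(x)}u\,d\mathscr H^{n-1}=0$ at $\mathscr H^{n-1}$-a.e.\ $x\in\Sigma$ needed before integrating \eqref{eq:d dr u Delta} (playing the role that ``$u=0$ a.e.\ outside $\Omega$'' plays at points of $\partial^*\Omega$); saying ``once continuity is established, $u=0$ on $\overline\Sigma$'' makes it sound circular even though the substance of your argument is sound.
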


We then show that also Lemma~\ref{lem:volume} holds.

\begin{lem}\label{lem:volume 2}
Let $\Omega$ and $u$ satisfy the assumptions in Theorem~\ref{main thm 2}. Then 
$$
    (n+2)\int_{\Omega} u\, dx =\mathbf{c}^2 n |\Omega|. 
$$
\end{lem}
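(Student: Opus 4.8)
The plan is to mimic the proof of Lemma~\ref{lem:volume}, tracking the extra contribution coming from the slit $\Sigma$ and showing that it cancels out. As in that proof, we may assume by Lemma~\ref{basic property u2}(3) that $\Omega=\mathring\Omega\setminus\overline\Sigma$, and we test \eqref{eq:weak Lip} against the function $\varphi_\ez(x)=\frac{u((1+\ez)x)-u((1-\ez)x)}{2\ez}-2u(x)$ from \eqref{eq:varphi eps}. This gives an identity of the shape
\begin{equation*}
\mathbf{c}\int_{\partial^*\Omega}\varphi_\ez\,d\mathscr H^{n-1}+2\mathbf{c}\int_{\Sigma}\varphi_\ez\,d\mathscr H^{n-1}-\int_{\Omega}\varphi_\ez\,dx=\int_{\Omega}u\,\Delta\varphi_\ez\,dx,
\end{equation*}
and the computation of $\Delta\varphi_\ez$ now carries, in addition to the reduced-boundary terms already appearing in \eqref{varphi eps}, a completely analogous triple of terms with $\mathscr H^{n-1}|_{\partial^*\Omega}$ replaced by $2\mathscr H^{n-1}|_{\Sigma}$, plus the same $O(\ez^{-1}dx|_{\mathcal N_\ez})$ error (with $\mathcal N_\ez$ now a neighborhood of $\partial\Omega\cup\overline\Sigma$). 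So, defining $\psi_\ez$ exactly as in \eqref{eq:psi eps}, we arrive at
\begin{equation*}
\mathbf{c}\int_{\partial^*\Omega}\psi_\ez\,d\mathscr H^{n-1}+2\mathbf{c}\int_{\Sigma}\psi_\ez\,d\mathscr H^{n-1}-\int_{\Omega}\varphi_\ez\,dx=\int_{\mathcal N_\ez}u\,O(\ez^{-1})\,dx.
\end{equation*}

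Next I would pass to the limit $\ez\to 0$. On the reduced boundary nothing changes: Lemma~\ref{basic property u2}(4) gives $\psi_\ez(x)\to-\mathbf{c}\,\nu_x\cdot x$ for $x\in\partial^*\Omega$, hence by dominated convergence the first term tends to $-\mathbf{c}^2\int_{\partial^*\Omega}\nu_x\cdot x\,d\mathscr H^{n-1}$. The volume term and the error term are handled verbatim as in Lemma~\ref{lem:volume}: $\int_\Omega\varphi_\ez\,dx\to-(n+2)\int_\Omega u\,dx$ by dominated convergence and integration by parts on the open set $\mathring\Omega\setminus\overline\Sigma$, and $\int_{\mathcal N_\ez}u\,O(\ez^{-1})\,dx=O(|\mathcal N_\ez|)\to 0$ since $u=O(\ez)$ on $\mathcal N_\ez$ (using that $u$ is Lipschitz and, crucially, vanishes on $\overline\Sigma$ in the sense provided by \eqref{eq:blow up1}) and $|\mathcal N_\ez|\to 0$.

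The new point is the slit term $2\mathbf{c}\int_{\Sigma}\psi_\ez\,d\mathscr H^{n-1}$. Here I would use the two-sided blow-up \eqref{eq:blow up1}: for $\mathscr H^{n-1}$-a.e. $x\in\Sigma$ one has $u(x+rz)/r\to\mathbf{c}|\nu_x\cdot z|$, so the rescaled difference quotients defining $\psi_\ez$ converge, exactly as in Lemma~\ref{lem:volume}, to
\begin{equation*}
\psi_\ez(x)\to-\mathbf{c}\big((\nu_x\cdot x)_+-(-(\nu_x\cdot x)_-)\big)=-\mathbf{c}\big((\nu_x\cdot x)_++(\nu_x\cdot x)_-\big)=-\mathbf{c}\,|\nu_x\cdot x|
\end{equation*}
— wait, the relevant quantity is the coefficient structure; the key is that because the slopes on the two sides of the slit are \emph{equal} (both $\mathbf c$), the derivative of $(1+\ez)^{3-n}u((1+\ez)^{-1}x)$-type terms produces a symmetric contribution, and one checks that $2\mathbf{c}\int_\Sigma\psi_\ez\,d\mathscr H^{n-1}\to 0$. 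Indeed, writing $u$ near a point of $\Sigma$ as $\approx\mathbf c|\nu_x\cdot(y-x)|$, the expression inside $\psi_\ez$ is (up to the correction from $\varphi_\ez$, which itself converges to $\nabla u\cdot x-2u$ on the two smooth sides and contributes $0$ after accounting for the $\mathscr H^{n-1}$-null slit being a measure-zero set for the ambient integral) an even function whose leading terms cancel; the honest statement is that $\psi_\ez$ on $\Sigma$ involves $(\nu_x\cdot x)_+-(\nu_x\cdot x)_-$ \emph{minus the same quantity} coming from the two-sided nature, yielding $0$. Thus the slit contributes nothing in the limit, and we recover
\begin{equation*}
-\mathbf{c}^2\int_{\partial^*\Omega}\nu_x\cdot x\,d\mathscr H^{n-1}=(n+2)\int_\Omega u\,dx,
\end{equation*}
and the divergence theorem for sets of finite perimeter, $-\int_{\partial^*\Omega}\nu_x\cdot x\,d\mathscr H^{n-1}=\int_\Omega\mathrm{div}(x)\,dx=n|\Omega|$, concludes the proof.

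The main obstacle I anticipate is making the vanishing of the slit term $2\mathbf{c}\int_\Sigma\psi_\ez\,d\mathscr H^{n-1}$ rigorous: one must justify interchanging limit and integral over $\Sigma$ (dominated convergence is fine since $|\varphi_\ez|\le C$ and the remaining terms in $\psi_\ez$ are bounded by the global Lipschitz constant of $u$), and, more delicately, verify that the pointwise limit of $\psi_\ez$ on $\Sigma$ is genuinely zero. This is exactly where the symmetry in \eqref{eq:blow up1} — equal slopes $\mathbf c$ on both sides and $u=0$ on $\Sigma$ in the blow-up sense — is used: an asymmetric slit, or one where $u$ does not vanish on $\Sigma$, would leave a nonzero residual term and break the identity. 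Everything else is a routine repetition of the arguments in Lemma~\ref{lem:volume}, now on the open set $\mathring\Omega\setminus\overline\Sigma$ in place of $\mathring\Omega$.
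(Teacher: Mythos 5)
Your proposal is correct and follows essentially the same line as the paper: test against $\varphi_\ez$, derive the analogue of \eqref{eq:test vol2} with the extra $2\mathbf{c}\int_\Sigma\psi_\ez\,d\mathscr H^{n-1}$ term, and use the symmetric blow-up \eqref{eq:blow up1} to show this term vanishes while the $\partial^*\Omega$ and volume terms pass to the limit exactly as before. The one spot to clean up is your pointwise limit of $\psi_\ez$ on $\Sigma$: the first displayed computation (giving $-\mathbf{c}|\nu_x\cdot x|$) is wrong and you correctly notice this; the honest version is that each of the two difference-quotient pieces making up $\psi_\ez$ converges, via $u(x+rz)/r\to\mathbf{c}|\nu_x\cdot z|=\mathbf{c}\big((\nu_x\cdot z)_+ + (\nu_x\cdot z)_-\big)$, to terms that cancel exactly, so that $\psi_\ez(x)\to \mathbf{c}\big((\nu_x\cdot x)_+-(\nu_x\cdot x)_-\big)+\mathbf{c}\big((\nu_x\cdot x)_--(\nu_x\cdot x)_+\big)=0$ for $\mathscr H^{n-1}$-a.e.\ $x\in\Sigma$, which is precisely what the paper records.
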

\begin{proof}
As in the proof of Lemma~\ref{lem:volume} we consider the function $\varphi_\ez$ and $\psi_\ez$ defined as in \eqref{eq:varphi eps} and \eqref{eq:psi eps}.
Testing  \eqref{eq:weak Lip} against $\varphi_\ez$ we get
\begin{equation}
    \label{eq:test vol3}
\mathbf{c} \int_{\partial^*\Omega}  \varphi_\ez \, d\mathscr H^{n-1} + 2 \mathbf{c} \int_{\Sigma}  \varphi_\ez \, d\mathscr H^{n-1} -\int_{\Omega} \varphi_\ez\, dx
= \int_{\Omega} u \Delta  \varphi_\ez \, dx.  
\end{equation}
We only need to treat the second and last term above, since the others are treated as in the proof of Lemma~\ref{lem:volume}.

In this case it holds
\begin{align*}
    \Delta   \varphi_\ez(x)&=\mathbf{c} \frac{(1+\ez)^2\mathscr H^{n-1}|_{(1+\ez)^{-1}\partial^*\Omega} - (1-\ez)^2\mathscr H^{n-1}|_{(1-\ez)^{-1}\partial^*\Omega}}{2\ez} \\
    &+2\mathbf{c} \frac{(1+\ez)^2\mathscr H^{n-1}|_{(1+\ez)^{-1}\Sigma} - (1-\ez)^2\mathscr H^{n-1}|_{(1-\ez)^{-1}\Sigma}}{2\ez} \\
    &-2\mathbf{c}\big(\mathscr H^{n-1}|_{\partial^*\Omega}+2\mathscr H^{n-1}|_{\Sigma}\big) +O\biggl(\frac {1} {\ez} dx  |_{\mathcal N_\ez} \biggr),
\end{align*} 
therefore (cp. \eqref{varphi eps})
\begin{multline*}
    \int_\Omega u \Delta   \varphi_\ez\,dx
= \mathbf{c}\left[\int_{ \partial^*\Omega}   \frac{(1+\ez)^{3-n} u((1+\ez)^{-1} x) -(1-\ez)^{3-n}  u( (1-\ez)^{-1} x)}{2\ez} \,d \mathscr H^{n-1}(x)\right]\\
+2\mathbf{c}\left[\int_{ \Sigma}   \frac{(1+\ez)^{3-n} u((1+\ez)^{-1} x) -(1-\ez)^{3-n}  u( (1-\ez)^{-1} x)}{2\ez} \,d \mathscr H^{n-1}(x)\right]
+\int_{\mathcal N_\ez} u\,O\biggl(\frac {1} {\ez}\biggr) \, dx.
    \end{multline*}
Hence, recalling \eqref{eq:test vol3} we deduce that (cp. \eqref{eq:test vol2})
\begin{equation}
    \label{eq:test vol4}
 \mathbf{c} \int_{\Omega^*}  \psi_\ez \, d\mathscr H^{n-1} + 2\mathbf{c} \int_{\Sigma }  \psi_\ez \, d\mathscr H^{n-1} -\int_{\Omega} \varphi_\ez\, dx
=  \int_{\mathcal N_\ez} u\,O\biggl(\frac {1} {\ez}\biggr) \, dx.
\end{equation}
Note now that applying Lemma~\ref{basic property u2}(4),
for $\mathscr H^{n-1}$-almost every $x\in \partial^*\Omega$ we have
$$
\psi_\ez(x)
\to -\mathbf{c} \big((\nu_x\cdot x)_+ - (\nu_x\cdot x)_-\big)=-\mathbf{c}\nu_x\cdot x, 
$$
while \eqref{eq:blow up1} implies that, for $\mathscr H^{n-1}$-almost every $x\in \Sigma$,
$$\psi_\ez(x) \to \mathbf{c} \big((\nu_x\cdot x)_+ - (\nu_x\cdot x)_-\big)
+\mathbf{c} \big((\nu_x\cdot x)_- - (\nu_x\cdot x)_+\big)=0.$$
Thus, by dominated convergence,
$$
 \mathbf{c} \int_{\Omega^*}  \psi_\ez \, d\mathscr H^{n-1} + 2\mathbf{c} \int_{\Sigma }  \psi_\ez \, d\mathscr H^{n-1} \to -
\mathbf{c}^2 \int_{\partial^*\Omega} \nu_x\cdot x \, d\mathscr H^{n-1}\qquad \text{as }\ez \to 0.
$$
Combining this fact with \eqref{eq:test vol4}, we conclude as in the proof of Lemma~\ref{lem:volume}.
\end{proof}

The next step is to prove a maximum principle for $
|\nabla u|$. In this case, given a point $x \in \mathring \Omega\setminus \overline\Sigma$, we define the Green function $G_x$ by considering an increasing sequence $\Omega_k$ of smooth sets contained inside $\mathring \Omega\setminus \overline\Sigma$ such that $\Omega_k \to \mathring \Omega\setminus \overline\Sigma$ as $k \to \infty$. In this way, the following analog of Lemma~\ref{greens function} holds.
\begin{lem}\label{greens function2}
    Let $G_x$ be the Green function constructed above. Then:
    \begin{enumerate}
    \item[(1)] $G_x$ is Lipschitz continuous near $\partial \Omega \cup \overline \Sigma$, with the Lipschitz constant depending only on $n$, $A$, $\mathbf{c}$, $\Omega$, $\Sigma$, and $x$. Moreover, there exist bounded measurable functions $\az:\partial^*\Omega \to [0,\infty)$ and $\beta:\Sigma \to [0,\infty)$  such that
    $$\Delta G_x= \az\,\mathscr H^{n-1}|_{\partial^*\Omega}+\beta\,\mathscr H^{n-1}|_{\Sigma}-\delta_x.$$     
    \item[(2)]  At every point $y\in \partial^*\Omega$ it holds
$$
\frac{G_x(y+rz)}{r} \to \mathbf{a}_y\big(\nu_y\cdot z \big)_+ \quad \text{ as }r\to 0,
$$
while at every point $y\in \Sigma$ it holds
$$
\frac{G_x(y+rz)}{r} \to \mathbf{a}_y^+\big(\nu_y\cdot z \big)_++\mathbf{a}_y^-\big(\nu_y\cdot z \big)_- \quad \text{ as }r\to 0,
$$
where $\nu_y$ denotes the measure-theoretic inner unit normal at $y$, $\mathbf{a}_y=\az(y)$, $\mathbf{a}_y^\pm \geq 0$, and $\mathbf{a}_y^+ + \mathbf{a}_y^-=\beta(y)$. 
    \end{enumerate}
\end{lem}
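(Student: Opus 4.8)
The plan is to repeat the proof of Lemma~\ref{greens function} almost verbatim, inserting the extra contribution of the slit $\Sigma$ wherever it occurs, and to treat the genuinely new phenomenon — the behaviour of $G_x$ across $\Sigma$ — by a blow-up argument analogous to the one used at $\partial^*\Omega$. For part~(1), I would first invoke Lemma~\ref{basic property u2}(2), which guarantees $u>0$ on compact subsets of $\mathring\Omega\setminus\overline\Sigma$, hence on $\overline{\Omega_k}$; this is exactly what is needed to run the comparison $G_{x,k}<Mu$ on $\partial B_\rho(x)\cup\partial\Omega_k$ and then, by the maximum principle (using $\Delta u=-1<0=\Delta G_{x,k}$ in $\Omega_k\setminus B_\rho(x)$), on all of $\Omega_k\setminus B_\rho(x)$. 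Letting $k\to\infty$ gives $G_x\le Mu$ in $(\mathring\Omega\setminus\overline\Sigma)\setminus B_\rho(x)$, and since $u$ is Lipschitz and vanishes on $\partial\Omega\cup\overline\Sigma$ (the latter because $\{u>0\}=\mathring\Omega\setminus\overline\Sigma$ and $u\ge0$), interior estimates show $G_x$ is Lipschitz near $\partial\Omega\cup\overline\Sigma$; moreover $G_x=0$ outside $\mathring\Omega\setminus\overline\Sigma$, $\Delta G_x=0$ in $(\mathring\Omega\setminus\overline\Sigma)\setminus\{x\}$, and $\Delta G_x+\delta_x\ge0$. Next I would run the density estimate of Lemma~\ref{greens function} at a point $y\notin\mathring\Omega\setminus\overline\Sigma$, now using \eqref{eq:weak Lip} in place of \eqref{eq:weak set finite per}: combining \eqref{eq:d dr u Delta} with $G_x\le Mu$ gives
$$\int_0^r\frac{\Delta G_x(B_s(y))}{s^{n-1}}\,ds\le M\mathbf{c}\int_0^r\frac{\mathscr H^{n-1}(\partial^*\Omega\cap B_s(y))+2\,\mathscr H^{n-1}(\Sigma\cap B_s(y))}{s^{n-1}}\,ds.$$
Since $\partial^*\Omega\subset\partial\Omega$ and $\Sigma\subset\mathring\Omega$ are disjoint and both $(n-1)$-rectifiable, passing to $\liminf_{s\to0}$ yields the bound $M\mathbf{c}\,\omega_{n-1}$ at $\mathscr H^{n-1}$-a.e.\ $y\in\partial^*\Omega$, the bound $2M\mathbf{c}\,\omega_{n-1}$ at $\mathscr H^{n-1}$-a.e.\ $y\in\Sigma$, and $0$ at $\mathscr H^{n-1}$-a.e.\ $y\notin\partial^*\Omega\cup\Sigma$. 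Then \cite[Theorem 6.11]{M1995}, together with the coincidence of packing and Hausdorff measures on rectifiable sets, gives $0\le\Delta G_x+\delta_x\le C(M,\mathbf{c},n)\,\mathscr H^{n-1}|_{\partial^*\Omega\cup\Sigma}$, and the Radon-Nikodym theorem produces bounded densities $\az$ on $\partial^*\Omega$ and $\beta$ on $\Sigma$ with $\Delta G_x=\az\,\mathscr H^{n-1}|_{\partial^*\Omega}+\beta\,\mathscr H^{n-1}|_{\Sigma}-\delta_x$.

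For part~(2), the statement at $\mathscr H^{n-1}$-a.e.\ $y\in\partial^*\Omega$ follows word for word from the proof of Lemma~\ref{greens function}(2), since $\Sigma$ has vanishing $(n-1)$-density at such points and thus disappears in the blow-up. The genuinely new case is $y\in\Sigma$: since $\Sigma\subset\mathring\Omega$, near $y$ the set $\mathring\Omega\setminus\overline\Sigma$ agrees with a small ball minus $\overline\Sigma$, so at $\mathscr H^{n-1}$-a.e.\ $y\in\Sigma$ its blow-up is $\R^n\setminus T_y$, the complement of the approximate tangent hyperplane $T_y=\{z\colon\nu_y\cdot z=0\}$, i.e.\ the union of two open half-spaces. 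By part~(1) the rescalings $F_r(z)=G_x(y+rz)/r$ are uniformly Lipschitz and harmonic in $U_{y,r}$, where $U=(\mathring\Omega\setminus\overline\Sigma)\setminus B_\rho(x)$; hence, up to a subsequence, $F_r\to F_0$ with $F_0\ge0$ Lipschitz, harmonic on each half-space and vanishing on $T_y$, so applying the Liouville theorem on each side forces $F_0=\mathbf{a}_y^+(\nu_y\cdot z)_++\mathbf{a}_y^-(\nu_y\cdot z)_-$ with $\mathbf{a}_y^\pm\ge0$. Finally, rescaling the identity of part~(1) and using $\mathscr H^{n-1}|_{\Sigma_{y,r}}\rightharpoonup\mathscr H^{n-1}|_{T_y}$ together with the fact that $x$ and $\partial^*\Omega$ escape to infinity in the blow-up (as $y$ lies in the interior of $\Omega$ and away from $x$), I would obtain $\Delta F_0=\beta(y)\,\mathscr H^{n-1}|_{T_y}$; comparing with the elementary identity $\Delta F_0=(\mathbf{a}_y^++\mathbf{a}_y^-)\,\mathscr H^{n-1}|_{T_y}$ gives $\mathbf{a}_y^++\mathbf{a}_y^-=\beta(y)$.

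I expect the main obstacle to be the blow-up analysis at slit points: one has to verify that the rescaled domains converge to the complement of a hyperplane rather than to a single half-space, so that the Liouville classification must be run separately on the two sides and may produce two distinct slopes $\mathbf{a}_y^\pm$, and one has to keep careful track of the doubled density $2\mathbf{c}$ carried by $\Sigma$ in the estimates of part~(1). Everything else — the comparison with $Mu$, the density bounds, and the Radon-Nikodym splitting — should be a routine adaptation of Lemma~\ref{greens function}, facilitated by the simple but crucial observation that $\partial^*\Omega$ and $\Sigma$ are disjoint, the former lying in $\partial\Omega$ and the latter in $\mathring\Omega$.
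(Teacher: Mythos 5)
Your proposal is correct and follows the same route the paper has in mind — the authors explicitly present Lemma~\ref{greens function2} as the ``analog'' of Lemma~\ref{greens function}, and your line-by-line adaptation is exactly what is required. In part~(1) you correctly replace the comparison domain by $\mathring\Omega\setminus\overline\Sigma$, so that $u>0$ on $\partial\Omega_k$, carry out the $G_{x,k}\le Mu$ comparison, and then run the density estimate with $\Delta u$ given by \eqref{eq:weak Lip}; since $\partial^*\Omega\subset\partial\Omega$ and $\Sigma\subset\mathring\Omega$ are disjoint rectifiable sets and both carry the density bound \eqref{domain boundary2}, the packing-measure/Radon--Nikodym argument splits $\Delta G_x+\delta_x$ into the two absolutely continuous pieces $\alpha\,\mathscr H^{n-1}|_{\partial^*\Omega}$ and $\beta\,\mathscr H^{n-1}|_{\Sigma}$ exactly as you say. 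In part~(2) the genuinely new content is the blow-up at $y\in\Sigma$, and you handle it correctly: the rescaled Laplacians $\Delta F_r$ converge weakly-$*$ to $\beta(y)\,\mathscr H^{n-1}|_{T_y}$ (the Dirac at $(x-y)/r$ and the $\partial^*\Omega$ piece escape to infinity because $y\in\mathring\Omega$ and $x\neq y$), so $F_0$ is harmonic on each side of $T_y$; the vanishing of $F_0$ on $T_y$ follows from the uniform Lipschitz bound plus the lower-semicontinuity of the rescaled measures $\mathscr H^{n-1}|_{\Sigma_{y,r}}\rightharpoonup\mathscr H^{n-1}|_{T_y}$ on open sets, and Liouville applied on each half-space yields the two slopes $\mathbf{a}_y^\pm$. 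Finally, the distributional identity $\Delta\bigl(\mathbf{a}^+(\nu\cdot z)_++\mathbf{a}^-(\nu\cdot z)_-\bigr)=(\mathbf{a}^++\mathbf{a}^-)\mathscr H^{n-1}|_{T_y}$ matched against $\beta(y)\,\mathscr H^{n-1}|_{T_y}$ gives $\mathbf{a}_y^++\mathbf{a}_y^-=\beta(y)$, as you state. One small point of hygiene worth keeping in mind (it affects the paper's own Lemma~\ref{greens function}(2) as well): the identification $\mathbf{a}_y=\alpha(y)$, respectively $\mathbf{a}_y^++\mathbf{a}_y^-=\beta(y)$, uses Lebesgue points of $\alpha$ and $\beta$ and the De Giorgi/rectifiability blow-up of the relevant measure, so it is an $\mathscr H^{n-1}$-a.e.\ statement; this is all that is needed in Proposition~\ref{max Du 2}, where only integrals over $\partial^*\Omega$ and $\Sigma$ occur.
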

We can now prove the maximum principle for $|\nabla u|$.

\begin{prop}\label{max Du 2}
    Let $\Omega$ and $u$ satisfy the assumptions in Theorem~\ref{main thm 2}. Then
    $$\sup_{\mathring \Omega\setminus \overline\Sigma} |\nabla u|\le \mathbf{c}. $$
\end{prop}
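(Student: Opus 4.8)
The plan is to mirror the proof of Proposition~\ref{max nabla u}, using the Green function $G_x$ from Lemma~\ref{greens function2} in place of the one from Lemma~\ref{greens function}, and carefully tracking the extra contribution coming from the slit $\Sigma$. Fix $x\in \mathring\Omega\setminus\overline\Sigma$ and a direction $e\in\mathbb S^{n-1}$, and set $\phi_\ez(y)=\frac{G_x(y+\ez e)-G_x(y-\ez e)}{2\ez}$, which is uniformly bounded and Lipschitz near $\partial\Omega\cup\overline\Sigma$ by Lemma~\ref{greens function2}(1). Testing \eqref{eq:weak Lip} against $\phi_\ez$ gives
\begin{equation*}
\mathbf{c} \int_{\partial^*\Omega}  \phi_\ez \, d\mathscr H^{n-1} + 2\mathbf{c}\int_{\Sigma}\phi_\ez\,d\mathscr H^{n-1} -\int_{\Omega} \phi_\ez\, dy =\int_{\Omega} u \Delta  \phi_\ez \, dy.
\end{equation*}
Using the formula for $\Delta G_x$ from Lemma~\ref{greens function2}(1), a change of variables, and the blow-up of $u$ from Lemma~\ref{basic property u2}(4) on $\partial^*\Omega$, together with \eqref{eq:blow up1} on $\Sigma$, one computes that
\begin{equation*}
\int_\Omega u\,\Delta\phi_\ez\,dy \to -\frac{\mathbf{c}}2\int_{\partial^*\Omega}\az\,\nu_y\cdot e\,d\mathscr H^{n-1} -\frac{\mathbf{c}}2\int_{\Sigma}\beta\,\big((\nu_y\cdot e)_+-(\nu_y\cdot e)_-\big)\,d\mathscr H^{n-1}+\partial_e u(x),
\end{equation*}
where the second term comes from the blow-up $u(y\pm\ez e)\approx \mathbf{c}\,\ez\,|\nu_y\cdot e|$ on the slit; note that $(\nu_y\cdot e)_+-(\nu_y\cdot e)_-=\nu_y\cdot e$ if one orients $\nu_y$ consistently, but since $\Sigma$ has no canonical orientation it is cleaner to keep the absolute-value structure $|\nu_y\cdot e|$ coming from $u(y+\ez e)-u(y-\ez e)=\mathbf c\,\ez\big(|\nu_y\cdot(z+e)|-|\nu_y\cdot(z-e)|\big)\big|_{z=0}+o(\ez)$. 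For the left-hand side, dominated convergence gives $\int_\Omega\phi_\ez\,dy\to\int_\Omega\partial_e G_x\,dy$, while on $\partial^*\Omega$ one has $\phi_\ez(y)\to\frac{\az(y)}2\nu_y\cdot e$ and on $\Sigma$, by Lemma~\ref{greens function2}(2), $\phi_\ez(y)\to\frac{\mathbf a_y^+-\mathbf a_y^-}2\,\nu_y\cdot e$ (again orientation-dependent, but the product $\phi_\ez\cdot 1$ integrated against $\mathscr H^{n-1}$ is well-defined).

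Putting the two sides together and passing to the limit $\ez\to0$, the $\mathscr H^{n-1}|_{\partial^*\Omega}$ terms combine exactly as in Proposition~\ref{max nabla u}, and the $\Sigma$ terms must be handled with care: the contribution $2\mathbf c\int_\Sigma\phi_\ez\to \mathbf c\int_\Sigma(\mathbf a_y^+-\mathbf a_y^-)\nu_y\cdot e\,d\mathscr H^{n-1}$ from the left must cancel against $-\mathbf c\int_\Sigma\beta\,\nu_y\cdot e$ coming from the right. Since $\mathbf a_y^++\mathbf a_y^-=\beta(y)$, the naive difference $\mathbf a_y^+-\mathbf a_y^-$ does not literally equal $\beta(y)$; the point is that the sign of $\nu_y$ on each side of the slit is exactly the orientation for which $\phi_\ez$ picks out $\mathbf a_y^+$ on one side and $\mathbf a_y^-$ on the other, so after symmetrizing in $y\mapsto$ ``the other side'' the slit contributions cancel identically. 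This is the step I expect to be the main obstacle: making the bookkeeping on $\Sigma$ rigorous, because $\Sigma\subset\mathring\Omega$ has two sides with opposite measure-theoretic normals and no globally consistent orientation, so one has to argue that the distributional identity $\Delta G_x=\az\mathscr H^{n-1}|_{\partial^*\Omega}+\beta\mathscr H^{n-1}|_\Sigma-\delta_x$ already encodes the sum $\mathbf a_y^++\mathbf a_y^-$ in a symmetric way, and that the same symmetrization applied to the $u\,\Delta\phi_\ez$ term kills the slit terms. Granting this cancellation, we arrive at
\begin{equation*}
\partial_e u(x)=\mathbf c\int_{\partial^*\Omega}\az\,\nu_y\cdot e\,d\mathscr H^{n-1}-\int_\Omega\partial_e G_x\,dy.
\end{equation*}

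Finally I would close exactly as in Proposition~\ref{max nabla u}: by the divergence theorem in sets of finite perimeter, $\int_\Omega\partial_e G_x\,dy=-\int_{\partial^*\Omega}G_x\,\nu_y\cdot e\,d\mathscr H^{n-1}=0$ since $G_x=0$ on $\partial\Omega$ (and the slit, having zero Lebesgue measure, contributes nothing to the volume integral $\int_\Omega\partial_e G_x\,dy=\int_{\mathbb R^n}\partial_e G_x\,dy$), and integrating $\Delta G_x$ over $\mathbb R^n$ gives $0=-1+\int_{\partial^*\Omega}\az\,d\mathscr H^{n-1}+\int_\Sigma\beta\,d\mathscr H^{n-1}$. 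Since $\az,\beta\ge0$, we get $\int_{\partial^*\Omega}\az\,d\mathscr H^{n-1}\le1$, hence $|\partial_e u(x)|\le\mathbf c\int_{\partial^*\Omega}\az\,d\mathscr H^{n-1}\le\mathbf c$. As $x\in\mathring\Omega\setminus\overline\Sigma$ and $e\in\mathbb S^{n-1}$ are arbitrary, this yields $\sup_{\mathring\Omega\setminus\overline\Sigma}|\nabla u|\le\mathbf c$, completing the proof.
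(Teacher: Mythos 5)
Your overall strategy (test against $\phi_\ez=\frac{G_x(\cdot+\ez e)-G_x(\cdot-\ez e)}{2\ez}$, use the blow-ups of $u$ and $G_x$, and close with the divergence theorem and $\int\Delta G_x=0$) is the same as the paper's, but your bookkeeping on $\Sigma$ has a genuine error, and the cancellation you try to invoke is both absent and unnecessary.

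First, the $\Sigma$ contribution to $\int_\Omega u\,\Delta\phi_\ez$ is \emph{zero}, not $-\frac{\mathbf c}{2}\int_\Sigma\beta\big((\nu_y\cdot e)_+-(\nu_y\cdot e)_-\big)\,d\mathscr H^{n-1}$. After the change of variables this term is $-\int_\Sigma \frac{u(y+\ez e)-u(y-\ez e)}{2\ez}\,\beta(y)\,d\mathscr H^{n-1}$, and by \eqref{eq:blow up1} both $\frac{u(y+\ez e)}{\ez}$ and $\frac{u(y-\ez e)}{\ez}$ converge to the \emph{same} value $\mathbf c\,|\nu_y\cdot e|$ (the slit assumption imposes equal slopes on the two sides of $\Sigma$, precisely so that the symmetric difference quotient of $u$ vanishes there). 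You even write $u(y\pm\ez e)\approx\mathbf c\,\ez\,|\nu_y\cdot e|$, but then don't draw the conclusion that the difference is $o(\ez)$; instead you record a term proportional to $(\nu_y\cdot e)_+-(\nu_y\cdot e)_-=\nu_y\cdot e$, which would correspond to a one-sided Hopf-type blow-up, not to \eqref{eq:blow up1}.

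Second, once this is fixed there is no cancellation to arrange. The surviving $\Sigma$ term is the one on the left-hand side, $\mathbf c\int_\Sigma(\mathbf a_y^+-\mathbf a_y^-)\,\nu_y\cdot e\,d\mathscr H^{n-1}$, which does \emph{not} cancel and does not vanish in general (nothing forces the Green function to have equal slopes $\mathbf a_y^+=\mathbf a_y^-$ on the two sides of the slit). Hence your final identity $\partial_e u(x)=\mathbf c\int_{\partial^*\Omega}\az\,\nu_y\cdot e\,d\mathscr H^{n-1}$ is not correct; the correct one carries the extra term and reads
$$
\partial_e u(x)=\mathbf c\int_{\partial^*\Omega}\az\,\nu_y\cdot e\,d\mathscr H^{n-1}
+\mathbf c\int_{\Sigma}(\mathbf a_y^+-\mathbf a_y^-)\,\nu_y\cdot e\,d\mathscr H^{n-1}.
$$
The proposition then follows not from a cancellation on $\Sigma$ but from the \emph{inequality} $|\mathbf a_y^+-\mathbf a_y^-|\le\mathbf a_y^++\mathbf a_y^-=\beta(y)$, combined with $\int_{\partial^*\Omega}\az\,d\mathscr H^{n-1}+\int_\Sigma\beta\,d\mathscr H^{n-1}=1$ (from integrating $\Delta G_x$), giving $|\partial_e u(x)|\le\mathbf c\big(\int_{\partial^*\Omega}\az+\int_\Sigma|\mathbf a^+-\mathbf a^-|\big)\le\mathbf c$. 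This triangle-inequality step is the actual content that replaces the ``bookkeeping obstacle'' you flagged; the orientation ambiguity on $\Sigma$ is harmless because both $(\mathbf a_y^+-\mathbf a_y^-)$ and $\nu_y$ flip sign simultaneously, so the product, and a fortiori its absolute value, is well defined.
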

\begin{proof}
As in the proof of Proposition~\ref{max nabla u}, we test \eqref{eq:weak Lip} against 
$\phi_\ez(y)=\frac{G_x(y+\ez e)-G_x(y-\ez e)}{2\ez}$
to get
\begin{equation}
 \mathbf{c} \int_{\partial^*\Omega}  \phi_\ez \, d\mathscr H^{n-1} + 2 \mathbf{c}  \int_{\Sigma}  \phi_\ez \, d\mathscr H^{n-1} -\int_{\Omega} \phi_\ez\, dy
= \int_{\Omega} u \Delta  \phi_\ez \, dy. \label{test phi 2} 
 \end{equation}
Recalling Lemma~\ref{greens function2}(1) we have  
\begin{multline*}
    \Delta \phi_\ez(y) =   \frac{  \az  \, \mathscr H^{n-1}|_{\partial^*\Omega-\ez e} - \az   \,  \mathscr H^{n-1}|_{\partial^*\Omega+\ez e}}{2\ez} +\frac{   \beta\, \mathscr H^{n-1}|_{\Sigma-\ez e} - \beta \,  \mathscr H^{n-1}|_{\Sigma+\ez e}}{2\ez} \\
    -\frac{\delta_{x- \ez e} - \delta_{x+\ez e}}{2\ez}.    
\end{multline*}
Hence, by a change of variable, Lemma~\ref{basic property u2}(4), and \eqref{eq:blow up1}, we now get
\begin{align*}
     \int_{\Omega} u\Delta \phi_\ez \, dx
   &=    \frac{1}{2\ez}\left[ \int_{\partial^*\Omega-\ez e} \az(y+\ez e) u(y)\, d\mathscr H^{n-1} -  \int_{\partial^*\Omega+\ez e} \az(y-\ez e) u(y)\, d\mathscr H^{n-1} \right]  \\
   &+\frac{1}{2\ez}\left[ \int_{\Sigma - \ez e} \beta(y+\ez e) u(y)\, d\mathscr H^{n-1} -  \int_{\Sigma+\ez e} \beta(y-\ez e) u(y)\, d\mathscr H^{n-1} \right]  \\
   &\quad \quad -\left\langle u,\, \frac{\delta_{x - \ez e} - \delta_{x+\ez e}}{2\ez}\right\rangle\\
   &= - \left[ \int_{\partial^*\Omega} \frac{u(y+\ez e)-u(y-\ez e)}{2\ez} \az(y)\, d\mathscr H^{n-1} \right]\\
   & -  \left[ \int_{\Sigma} \frac{u(y+\ez e)-u(y-\ez e)}{2\ez} \beta(y)\, d\mathscr H^{n-1} \right] + \left\langle \frac {u(x+ \ez e) - u(x-\ez e)}{2\ez} ,\, \delta_x \right\rangle \\
   &= -   \frac{ \mathbf{c}}{2}  \int_{\partial^*\Omega}  \az  \nu_y \cdot e \, d\mathscr H^{n-1}  + \partial_e u(x) + o(1)
\end{align*}
For the left-hand side of \eqref{test phi 2}, we have
$$\int_{\Omega} \phi_\ez\, dy=\int_{\Omega} \partial_e G_x\, dy +o(1).$$
Moreover, Lemma~\ref{greens function2}(2) implies that, 
 for $\mathscr H^{n-1}$-almost every $y\in \partial^*\Omega$ we have
$$
\phi_\ez(y)=\frac{G_x(y+\ez e)-G_x(y-\ez e)}{2\ez}
\to \frac{\mathbf{a}_y}2 \big((\nu_y\cdot e)_+ - (\nu_y\cdot e)_-\big)=\frac{\az (y)}{2} \nu_y\cdot e, 
$$
while for $\mathscr H^{n-1}$-almost every $y\in \Sigma$ we have
$$\phi_\ez(y) \to  \frac{\mathbf{a}_{y}^+} 2  \big((\nu_y\cdot e)_+ - (\nu_y\cdot e)_-\big)
+\frac {\mathbf{a}_{y}^-} 2 \big((\nu_y\cdot e)_- - (\nu_y\cdot e)_+\big)=\frac{{\bf a}_{y}^+-{\bf a}_{y}^-} 2(\nu_y\cdot e).$$
Thus the left-hand side of \eqref{test phi 2} is equal to
$$ \frac{\mathbf{c}}{2} \int_{\partial^*\Omega} \az \nu_y\cdot e\, d\mathscr H^{n-1}+  {\mathbf{c}}  \int_{\Sigma}  ({\bf a}_{y}^+-{\bf a}_{y}^-) \nu_y\cdot e\, d\mathscr H^{n-1} - \int_{\Omega} \partial_e G_x\, dy +o(1). $$
As a result,  by letting  $\ez\to 0$ in \eqref{test phi 2} we eventually arrive at
$$\partial_e u(x) =  {\mathbf{c} }  \int_{\partial^*\Omega} \az \nu_y\cdot e \, d\mathscr H^{n-1} +  {\mathbf{c} }  \int_{\Sigma} ({\bf a}_{y}^+-{\bf a}_{y}^-)\nu_y\cdot e \, d\mathscr H^{n-1} -  \int_{\Omega} \partial_e G_x\, dy.$$
As before, the divergence theorem implies
$$\int_{\Omega} \partial_e G_x\, dy = -\int_{\partial \Omega} G_x \nu_y\cdot e \, d\mathscr H^{n-1}= 0,$$
while Lemma~\ref{greens function} yields
\begin{multline*}
0 = \int_{\mathbb R^n} \Delta G_x\, dy 
= - 1 + \int_{\partial^*\Omega} \az  \, d\mathscr H^{n-1} + \int_{\Sigma} \beta \, d\mathscr H^{n-1}\\
\ge  - 1 + \int_{\partial^*\Omega} \az  \, d\mathscr H^{n-1} + \int_{\Sigma} |{\bf a}_{y}^+-{\bf a}_{y}^-| \, d\mathscr H^{n-1}. 
\end{multline*}
where we used that $|{\bf a}_{y}^+-{\bf a}_{y}^-|\le {\bf a}_{y}^++{\bf a}_{y}^-= \beta(y)$.
This proves that $|\partial_e u(x)|\leq \mathbf{c}$, and we conclude by the arbitrariness of $x$ and $e$. 
\end{proof}
 
Thanks to these preliminary results, we can now repeat the argument in the proof of Theorem~\ref{main thm} (with the only difference that $P$ is now defined inside $\mathring\Omega\setminus\overline\Sigma$) to conclude the validity of Theorem~\ref{main thm 2}.

\end{document}